\newcommand{\fa}{\hbox{ for all }}
\newcommand{\ns}{\calh_k(\Omega)}
\newcommand{\R}{\mathbb{R}}
\newcommand{\N}{\mathbb{N}}
\newcommand{\calh}{\mathcal H}
\newcommand{\inner}[2]{\ifthenelse{\equal{#2}{}}{\left\langle\cdot,\cdot\right\rangle_{#1}}{\left\langle#2\right\rangle_{#1}}}
\newcommand{\norm}[2]{\ifthenelse{\equal{#2}{}}{\left\|\cdot\right\|_{#1}}{\left\|#2\right\|_{#1}}}
\newcommand{\seminorm}[2]{\ifthenelse{\equal{#2}{}}{\left|\cdot\right|_{#1}}{\left|#2\right|_{#1}}}
\DeclareMathOperator{\Sp}{span}
\DeclareMathOperator{\argmax}{arg\max}
\DeclareMathOperator{\argmin}{arg\min}
\DeclareMathOperator{\conv}{co}
\DeclareMathOperator{\clos}{clos}
\DeclareMathOperator{\Lip}{Lip}
\newcommand{\calb}{\mathcal B}
\newtheorem{theorem}{Theorem}
\newtheorem{proposition}[theorem]{Proposition}
\newtheorem{lemma}[theorem]{Lemma}
\newtheorem{remark}[theorem]{Remark}
\newtheorem{corollary}[theorem]{Corollary}
\title{On the optimality of target-data-dependent kernel greedy interpolation in Sobolev Reproducing Kernel 
Hilbert Spaces}
\author[1]{Gabriele Santin
\thanks{gsantin@fbk.eu}}
\affil[1]{Digital Society Center, Bruno Kessler Foundation (Trento, Italy)}
\author[2]{Tizian Wenzel
\thanks{tizian.wenzel@mathematik.uni-stuttgart.de}}
\author{Bernard Haasdonk
\thanks{bernard.haasdonk@mathematik.uni-stuttgart.de}}
\affil[2]{Institute for Applied Analysis and Numerical Simulation, University of Stuttgart (Germany)}
\date{\today}
\begin{document}
\maketitle

\begin{abstract}
Kernel interpolation is a versatile tool for the approximation of functions from data, and it can be proven to have some optimality properties when used with 
kernels related to certain Sobolev spaces.

In the context of interpolation, the selection of optimal function sampling locations is a central problem, both from a practical perspective, and as an 
interesting theoretical question.
Greedy interpolation algorithms provide a viable solution for this task, being efficient to run and provably accurate in their approximation.

In this paper we close a gap that is present in the convergence theory for these algorithms by employing a recent result on 
general greedy algorithms. 
This modification leads to new convergence rates which match the optimal ones when restricted to the $P$-greedy target-data-independent selection rule, and 
can additionally be proven to be optimal when they fully exploit adaptivity ($f$-greedy).

Other than closing this gap, the new results have some significance in the broader setting of the optimality of general approximation algorithms in 
Reproducing Kernel Hilbert Spaces, as they allow us to compare adaptive interpolation with non-adaptive best nonlinear approximation.
\end{abstract}

\section{Interpolation and greedy algorithms}

We denote a strictly positive definite and continuous kernel as $k$, and let $\ns$ be its Reproducing Kernel Hilbert Space (or native space) over a bounded set 
$\Omega\subset\R^d$. The 
properties of these kernels and spaces, and their approximation theory, are well studied e.g. in \cite{Fasshauer2015,Saitoh2016,Wendland2005}, to which we 
refer 
the interested reader.

For a function $f:\Omega\to\R$ and finitely many pairwise distinct points $X\subset \Omega$, there is a unique function $I_X f \in V(X)\coloneqq
\Sp\{k(\cdot, x):x\in 
X\}$ which interpolates $f$ at $X$. This kernel interpolant is the interpolant of minimal norm in $\ns$, and it coincides with the orthogonal projection 
$\Pi_{V(X)} f$ of $f$ onto $V(X)$ whenever $f\in \ns$.

We will make repeated use of the power function $P_X$ associated to this interpolation process. For $X\subset \Omega$, $x\in\Omega$, we set
\begin{equation}\label{eq:power_as_proj}
P_{X}(x)
\coloneqq
\sup\limits_{0\neq f\in \ns}\frac{\left|f(x)- I_X f(x)\right|}{\norm{\ns}{f}}
= \norm{\ns}{k(\cdot, x) - \Pi_{V(X)} k(\cdot, x)},
\end{equation}
which clearly provides a worst-case error bound
\begin{equation}\label{eq:power_fun_err_bound}
\left|f(x)- I_X f(x)\right|\leq P_X(x) \norm{\ns}{f}\;\;\fa f\in\ns,\ x\in \Omega.
\end{equation}
It can easily be checked that $P_{Y}(x)\leq P_X(x)$ for $X\subset Y$ and $x\in \Omega$, and $P_\emptyset(x) = \sqrt{k(x,x)}$.
With this in mind, we assume throughout the paper that $k(x,x)\leq 1$ for all $x\in\Omega$, which implies
\begin{equation}\label{eq:kernel_normalization}
\norm{L_\infty(\Omega)}{P_X} \leq \sup_{x\in\Omega}\sqrt{k(x,x)}\leq 1 \;\;\fa X\subset \Omega.
\end{equation}
Observe that for a bounded kernel this condition can be easily met by scaling, without changing the corresponding native space but only rescaling its norm.

Whenever $k$ is well defined on a bounded set $\Omega'\supset \Omega$ (i.e., it can be evaluated and it is continuous and strictly positive definite on 
$\Omega'$), it additionally holds
\begin{equation}\label{eq:rkhs_extension}
\ns\subset \calh_k(\Omega'),
\end{equation}
and this inclusion is in fact a continuous embedding with
\begin{equation}\label{eq:rkhs_embedding}
\norm{\calh_k(\Omega')}{E f}= \norm{\ns}{f}\;\;\fa f\in \ns,
\end{equation}
where $E f\in \calh_k(\Omega')$ is the extension of $f\in\ns$ to $\calh_k(\Omega')$ (see Theorem 10.46 in \cite{Wendland2005}). For simplicity of notation, we 
will simply identify $f$ in place of $E f$ in the following.

We will work with incrementally selected sets of points. For an arbitrary enumeration $\{x_1, \dots, x_N\}$ of $X\subset \Omega$, setting $X_i\coloneqq\{x_j, 
1\leq j\leq i\}$ there is an identity
\begin{equation}\label{eq:norm_as_expansion}
\norm{\ns}{I_{X} f}^2 = \sum_{i=1}^N \frac{\left|(f - I_{X_{i-1}} f)(x_i)\right|}{P_{X_{i-1}}(x_i)}^2,\;\;f\in\ns,
\end{equation}
which can be proven using the Newton basis of $V(X)$ \cite{Muller2009,Pazouki2011}.

In this paper we will prove refined rates of convergence for some kernel greedy interpolation algorithms. These algorithms amount at computing kernel 
interpolants on nested sets of points $\emptyset\eqqcolon X_0\subset X_1\subset\dots$, where $X_{n+1}\coloneqq X_{n}\cup\{x_{n+1}\}$ and $x_{n+1}\in\Omega$ is 
selected 
at each iteration according to a certain selection rule. 
The results that we are going to present apply to the scale of $\beta$-greedy algorithms \cite{Wenzel2022c}, $\beta\geq 0$, which are defined as
\begin{equation*}
x_{n+1}\coloneqq \argmax\limits_{x\in \Omega\setminus X_n}\ \eta_{\beta}(f, X_n)(x),
\end{equation*}
with the selection rule
\begin{equation}\label{eq:beta_greedy}
\eta_{\beta}(f, X_n)(x)\coloneqq\left|\left(f - I_{X_n} f\right)(x)\right|^\beta P_{X_n}(x)^{1-\beta},
\end{equation}
which is extended by continuity to $\beta=\infty$ as
\begin{equation*}
\eta_{\infty}(f, X_n)(x)\coloneqq \frac{\left|\left(f - I_{X_n} f\right)(x)\right|}{P_{X_n}(x)}.
\end{equation*}

Greedy algorithms are ubiquitous in applied mathematics, and in particular in approximation theory \cite{Temlyakov2008}, and to the best of our knowledge 
the first kernel greedy interpolation algorithm has been studied in \cite{SchWen2000}. This is the $f$-greedy algorithm, which is the $\beta$-greedy algorithm 
with $\beta=1$. Other significant instances are the $P$-greedy \cite{DeMarchi2005} ($\beta=0$), $f\cdot P$-greedy \cite{Dutta2021a} ($\beta=1/2$, originally named \emph{psr}-greedy), and
$f/P$-greedy \cite{Mueller2009} ($\beta=\infty$) algorithms.
Section 2.2 in \cite{Wenzel2022c} contains a detailed account of the properties of these different algorithms, and an overview of the progressive 
development of their theoretical understanding.

The current most comprehensive convergence theory for these algorithms has been presented so far in \cite{Santin2017x,Wenzel2021a} for $\beta=0$, and in 
\cite{Wenzel2022c} for general $\beta$.
Specifically, Theorem 4.1 in \cite{Santin2017x,Wenzel2021a} proves that any rate (algebraic or exponential) of uniform best interpolation in $\ns$ transfers
to exactly the same rate, with different constants, for the $P$-greedy algorithm. In particular, $P$-greedy achieves the worst-case optimal interpolation error 
in $\ns$.
This result has been significantly extended to the entire scale of $\beta$-greedy algorithms in \cite{Wenzel2022c}, where it has been proven that the greedy 
algorithms $\beta\in[0,1]$ leads to an additional $n^{-\beta/2}$ factor in the interpolation error bounds.
This result proved for the first time that the barrier for interpolation with optimal points can be broken with adaptivity, and with a rate factor that is kernel- and
dimension-independent.

These results are of particular interest for kernels whose native space is norm equivalent to the Sobolev space $W_2^\tau(\Omega)$. 
This is in particular the case for translational invariant kernels $k(x,y)\coloneqq \phi(x-y)$, $\phi:\R^d\to\R$, with $\phi\in L_1(\R^d)\cap C(\R^d)$, and 
such that there 
exists $\tau>d/2$ and $c,c'>0$ for which
\begin{equation}\label{eq:sobolev_embedding} 
c (1 + \norm{2}{\omega}^2)^{-\tau} \leq \hat \phi(\omega) \leq c' (1 + \norm{2}{\omega}^2)^{-\tau},\;\;\omega\in\R^d,
\end{equation}
and domains $\Omega$ that have a Lipschitz boundary. Then there is a norm equivalence $\ns\asymp W_2^\tau(\Omega)$, i.e., $\ns=W_2^\tau(\Omega)$ as sets and the 
norms are equivalent (see e.g. Chapter 10 in \cite{Wendland2005}). More specifically, the upper bound~\eqref{eq:sobolev_embedding} guarantees the 
existence of the embedding $\ns\hookrightarrow W_2^\tau(\Omega)$, while the lower bound guarantees the opposite one.
Although the error bounds of \cite{Wenzel2022c} for $\beta$-greedy account for adaptivity, 
for these kernels they contain an additional term depending on $\log(n)$ raised to some 
power, which seems to be an artifact due to the proof strategy, as demonstrated by numerical experiments and by the fact that the results
obtained for $\beta=0$ are suboptimal with respect to those of \cite{Santin2017x,Wenzel2021a}.

In this paper we make use of a recent result from \cite{Li2023} on general greedy algorithms to formulate a new convergence result 
(Theorem~\ref{thm:new_conv_rates}) that relates the convergence of the $\beta$-greedy algorithms to certain entropy numbers, instead of the Kolmogorov widths 
used before (we refer to Section~\ref{sec:gen_ban_sp} for precise definitions of these notions). This new result applies to general strictly positive definite 
kernels, and it leads to some corollaries when the smoothness of the kernel is taken into account.
These results remove the sub-optimal $\log(n)$ term in the case of Sobolev kernels, proving new rates that are illustrated in
Figure~\ref{fig:rates_of_convergence}.
\begin{figure}
\centering
\includegraphics[width=0.8\textwidth]{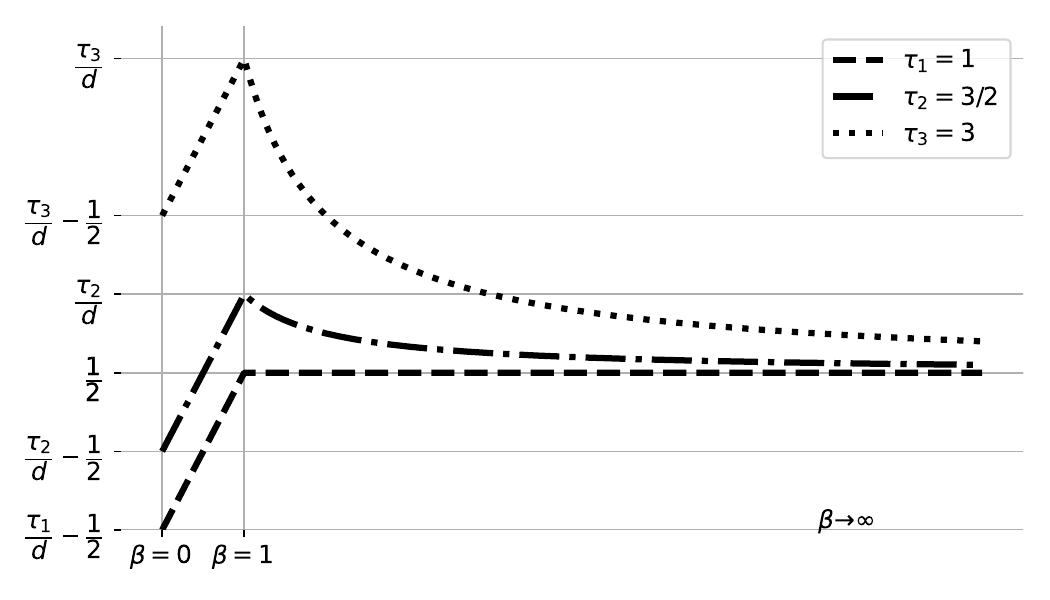}
\caption{Visualization of the new rates of algebraic convergence of the $\beta$-greedy algorithms when $\ns$ is continuously embedded in 
$W^\tau_2(\Omega)$, $\Omega\subset\R^d$, $\tau>d/2$. The lines show the rates for $d=2$ and $\tau_1=1$ (i.e., $\tau_1/d-1/2=0$ -- shown to illustrate the 
limiting case), $\tau_2=3/2$ (i.e., $\tau_2/d-1/2\leq1/2$), and $\tau_3=3$ (i.e., $\tau_3/d-1/2>1/2$). }
\label{fig:rates_of_convergence}
\end{figure}
Apart from this correction, the new estimates are proven to be optimal under different perspectives: we demonstrate with both explicit and 
numerical examples that they cannot be improved if they have to hold for general kernels, and compare them with the rates of uniform best
nonlinear approximation in $\ns$.

The paper is organized as follows. Section~\ref{sec:gen_ban_sp} recalls a fundamental results from \cite{Li2023}, and provides two simple extensions that will 
be used in the following. In Section~\ref{sec:new_conv} we derive the new convergence results, by first introducing a greedy selection rule that works also on 
open bounded sets (Section~\ref{sec:weak_greedy}), and then proving some extensions of the results of ~\cite{Wenzel2022c} (Section~\ref{sec:pre}) to arrive at 
our general convergence result (Section~\ref{sec:convergence_ent}), which is later specified to take into account the smoothness of the kernel 
(Section~\ref{sec:entropy}). These results are discussed in Section~\ref{sec:discussion}, where in particular we address the issue of their optimality, under 
different perspectives. Numerical experiments illustrate some further properties of our new estimates in Section~\ref{sec:numerics}.

\section{A general result on entropy numbers in Hilbert spaces}\label{sec:gen_ban_sp}
We start by introducing a result of \cite{Li2023} that will be the key step of our analysis.

Consider a general Banach space $\calb$, a compact set $K\subset \mathcal \calb$, and let $B(x, \varepsilon)\subset\calb$ be the ball with center $x\in\calb$ 
and
radius $\varepsilon>0$.
We recall (see e.g. \cite{Carl1981,Pinkus1985}) that for $n\in\N$, the $n$-th dyadic entropy number of $K$ in $\calb$ is defined as
\begin{align*}
\varepsilon_n(K, \calb) 
&\coloneqq \inf\left\{\varepsilon > 0: K \text{ can be covered by } 2^n \text{ balls of radius } \varepsilon\right\}\\
&\coloneqq\inf\left\{\varepsilon > 0: \exists x_1, \dots, x_{q}\in \calh, q\leq 2^n: K\subset \bigcup_{i=1}^q B(x_i, \varepsilon)\right\},
\end{align*}
and the Kolmogorov $n$-width of $K$ in $\calb$ is
\begin{equation}\label{eq:kol_width}
d_n(K, \calb)\coloneqq \inf\limits_{V_n\subset \calb, \dim(V_n)\leq n} \sup\limits_{f\in K} \inf\limits_{g\in V_n}\norm{\calb}{f-g}.
\end{equation}
We furthermore recall that the closed and symmetric or absolutely convex hull of a subset $K\subset \calb$ is defined as
\begin{align*}
\conv(K)
= \clos_{\calb}\left\{\sum_{i} c_i g_i: g_i\in K, \sum_i|c_i|\leq 1\right\}.
\end{align*}
With this notation, Lemma 2.1 in \cite{Li2023} states the following, with two minor modifications that we are going to explain in a remark after the statement.
\begin{lemma}\label{lemma:product}
Let $K\subset\calh$ be a precompact set in a Hilbert space $\calh$, let $n\in\N, m\in\N_0$, and let $v_1,\dots, v_{n+m}\in K$ be arbitrary. Define 
$V_0\coloneqq\{0\}$, $V_i\coloneqq\Sp\{v_j, 1\leq j\leq i\}$. 
Then 
\begin{equation*}
\left(\prod_{i=m}^{m+n-1} \norm{\calh}{v_i - \Pi_{V_{i-1}} v_i}\right)^{1/n} 
\leq (n! S_n)^{1/n} \varepsilon_n\left(\conv(K),\calh\right),
\end{equation*}
where $S_n\coloneqq \pi^{n/2} / \Gamma(n/2+1)$ is the volume of the unit ball in $\R^n$.
\end{lemma}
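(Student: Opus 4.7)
The plan is to interpret the product on the left-hand side as the volume of a parallelepiped in a suitable finite-dimensional subspace, bound that parallelepiped by a cross-polytope inscribed in a projection of $\conv(K)$, and control the volume of the latter by a covering (entropy) argument.

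First, I would work in the at-most-$n$-dimensional subspace $W \coloneqq V_{m+n-1} \ominus V_{m-1}$ and consider the projections $\bar v_i \coloneqq v_i - \Pi_{V_{m-1}} v_i$ for $i = m, \ldots, m+n-1$. Using the orthogonal decomposition $V_{i-1} = V_{m-1} \oplus \Sp\{\bar v_m, \ldots, \bar v_{i-1}\}$, a direct calculation yields $v_i - \Pi_{V_{i-1}} v_i = \bar v_i - \Pi_{\Sp\{\bar v_m, \ldots, \bar v_{i-1}\}} \bar v_i$, so the product in the lemma is precisely the standard Gram--Schmidt expression for the $n$-dimensional volume $V$ of the parallelepiped spanned by $\bar v_m, \ldots, \bar v_{m+n-1}$ in $W$. (If these vectors are linearly dependent then the left-hand side vanishes and the bound is trivial, so I may assume $\dim W = n$.)

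Next I would use the elementary identity $\mathrm{vol}_W(C) = (2^n / n!)\,V$ for the cross-polytope $C \coloneqq \conv\{\bar v_m, \ldots, \bar v_{m+n-1}\}$, where $\conv$ is the paper's absolute convex hull. Since each $v_i \in K \subset \conv(K)$ and $\Pi_W$ is linear and continuous, one has $\bar v_i = \Pi_W v_i \in \Pi_W \conv(K) = \conv(\Pi_W K)$, and by convexity $C \subset \Pi_W \conv(K)$. Finally, any covering of $\conv(K)$ by $2^n$ balls of radius $\varepsilon_n(\conv(K),\calh)$ in $\calh$ is mapped by the $1$-Lipschitz projection $\Pi_W$ to a covering of $\Pi_W \conv(K)$ by at most $2^n$ balls of the same radius in $W$, giving $\mathrm{vol}_W(\Pi_W \conv(K)) \leq 2^n S_n \varepsilon_n(\conv(K),\calh)^n$. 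Chaining the three inequalities yields $V \leq n!\, S_n\, \varepsilon_n(\conv(K),\calh)^n$, and the stated bound follows upon taking $n$-th roots.

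The only slightly delicate step is the first one: recognizing that the product of residuals taken against the growing chain $V_{i-1}$ equals a single $n$-dimensional volume in the fixed quotient $W$, which is the bookkeeping that makes the volumetric comparison applicable. Once this identification is in place, everything else is a standard chain of convex-geometric and covering estimates, so I would not expect further substantial obstacles.
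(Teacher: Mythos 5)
Your proof is correct, but it takes a genuinely different route from the paper's. The paper's argument is just a reduction: noting that $\Sp\{v_m,\dots,v_{i-1}\}\subset V_{i-1}$, so that projecting onto this smaller subspace can only enlarge the residual norms, the product is bounded by the Gram--Schmidt product of $v_m,\dots,v_{m+n-1}$ alone (with the initial space $V_{m-1}$ forgotten), to which Lemma~2.1 of \cite{Li2023} is applied as a black box. You instead re-derive the underlying volume/entropy inequality from first principles: you identify the product \emph{exactly} with the Gram--Schmidt volume of the projected vectors $\bar v_i = \Pi_W v_i$ in $W = V_{m+n-1}\ominus V_{m-1}$, using the identity $v_i-\Pi_{V_{i-1}}v_i = \bar v_i - \Pi_{\Sp\{\bar v_m,\dots,\bar v_{i-1}\}}\bar v_i$, which follows from the orthogonal splitting $V_{i-1}=V_{m-1}\oplus\Sp\{\bar v_m,\dots,\bar v_{i-1}\}$; then you compare that volume to the inscribed cross-polytope, embed the latter in $\Pi_W\conv(K)$, and bound its volume by transporting a covering of $\conv(K)$ through the $1$-Lipschitz projection $\Pi_W$. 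What your route buys is self-containment (you reproduce the content of the cited lemma rather than citing it) and a slightly sharper intermediate statement, since the product equals the projected parallelepiped volume rather than being merely bounded by the unprojected one; what the paper's route buys is brevity. One small point you pass over silently: since $\varepsilon_n$ is defined as an infimum, a covering by $2^n$ balls exists only for each $\varepsilon>\varepsilon_n$, so the volume bound should be established for every such $\varepsilon$ and the conclusion obtained by letting $\varepsilon\downarrow\varepsilon_n$ --- routine, and not a real gap.
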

\begin{proof}
Let $w_i\coloneqq v_{i+m-1}$ for $i=1, \dots, n$, and set $W_0\coloneqq\{0\}$, $W_i\coloneqq\Sp\{w_j, 1\leq j\leq i\}=\Sp\{v_j, n\leq j\leq n+i-1\}$ for $1\leq 
i\leq n$. 
Using the fact that $W_{i-m}\subset V_{i-1}$ for $i=m,\dots, m+n-1$, we have
\begin{align}\label{eq:lemma_tmp}
\left(\prod_{i=m}^{n+ m-1} \norm{\calh}{v_i - \Pi_{V_{i-1}} v_i}\right)^{1/n}  
&\leq \left(\prod_{i=m}^{n+m-1} \norm{\calh}{v_i - \Pi_{W_{i-m}} v_i}\right)^{1/n}\\
&= \left(\prod_{i=m}^{n+m-1} \norm{\calh}{w_{i-m+1} - \Pi_{W_{i-m}} w_{i-m+1}}\right)^{1/n}\nonumber\\
&= \left(\prod_{i=1}^{n} \norm{\calh}{w_{i} - \Pi_{W_{i-1}} w_{i}}\right)^{1/n}.\nonumber
\end{align}
Applying now Lemma~2.1 in \cite{Li2023} to the last term in \eqref{eq:lemma_tmp} for the vectors $w_1, \dots, w_n\in K$ gives the result.
\end{proof}

\begin{remark}\label{rem:compactness}
The original result (Lemma 2.1 in \cite{Li2023}) assumes $K$ to be compact. However, the entire statement and its proof only consider $\conv(K)$, and this 
set is compact if $K$ is even just precompact since the base space $\calh$ is complete, see e.g.~\cite{Schaefer1971}\footnote{See the corollary in Chapter 2, 
Section 4.3, in \cite{Schaefer1971}. In the notation of \cite{Schaefer1971}, $\conv(K)$ is called the closed, convex, circled hull of $K$ (see Section 1.3 in 
the same chapter).}.
We thus formulate the result in these terms.
Observe that this assumption is not new in this setting, see e.g.~\cite{Gao2001}.

Moreover, the original lemma is formulated for elements $v_1, \dots, v_n$ instead of $v_{m}, \dots, v_{m+n-1}$ (i.e., for $m=1$). This small modification, of 
which we provide a 
proof for the sake of completeness, will prove crucial in our forthcoming error estimates.
\end{remark}

This result has been used in \cite{Li2023} to refine the convergence rates of \cite{Binev2011} for the Reduced Basis greedy (RB-greedy) algorithm, and its weak 
variant (see Theorem 2.2 in \cite{Li2023}). This algorithm aims at approximating a compact set $K\subset\calh$ representing a certain 
solution manifold of a parametric problem. Roughly speaking, the approximant is built by projecting $K$ into linear subspaces $V_n\coloneqq\Sp\{f_i, 1\leq 
i\leq n\}$, $n\geq 1$, spanned by greedily selected elements
\begin{equation*}
f_n\coloneqq \max_{f\in K} \norm{\calh}{f - \Pi_{V_{n-1}} f}.
\end{equation*}
The theory of \cite{Binev2011} has been the basis for the development of the convergence theory of kernel greedy interpolation algorithms, as discussed in the 
next section.
Before moving to this kernel setting, we briefly comment on the growth of the upper bound in Lemma~\ref{lemma:product}.
\begin{remark}\label{rem:constant}
The term $(n! S_n)^{1/n}$ grows like $\sqrt{n}$, and it is bounded by $\sqrt{2\pi/e}\ n^{1/2}$ for $n\geq n_0$ sufficiently large (see the discussion before 
Corollary 2.3 and after Theorem 4.3 in \cite{Li2023}).
However, since it will be convenient to deal with unrestricted values of $n$, we show here that
\begin{equation}\label{eq:growth_of_rhs}
(n! S_n)^{1/n} \leq \sqrt{5}\ n^{1/2},\ n\in\N.
\end{equation}
To see this, observe that for $n\coloneqq 2m$, $m\geq 1$, $m\in\N$, since $\Gamma(m+1) = m!$ for $m\in\N$ we have
\begin{align*}
(n! S_n)^{1/n}
&= 
\sqrt{\pi} \left(\frac{(2m)!}{m!}\right)^{1/(2m)}
= \sqrt{\pi} \left((m+1) \cdot \ldots \cdot (2m))\right)^{1/(2m)}\\
&\leq \sqrt{\pi} \left(\frac{1}{m}\left((m+1) + \ldots + (2m))\right)\right)^{1/2}
=\sqrt{\pi} \left(\frac{1}{m}\frac{m+1 + 2m}{2}m\right)^{1/2}\\
&=\sqrt{\pi} \left(\frac{3 m+1}{2}\right)^{1/2}
\leq \sqrt{\pi} (2m)^{1/2} = \sqrt{\pi} n^{1/2}.
\end{align*}
Similarly for $n\coloneqq2m -1$, $m\in \N$, $m\geq 1$, we have $\Gamma(n/2+1) = \Gamma(m +1/2)\geq \Gamma(m) = (m-1)!$, and thus
\begin{align*}
(n! S_n)^{1/n}
&\leq \sqrt{\pi} \left(\frac{(2m-1)!}{(m-1)!}\right)^{1/(2m-1)}
= \sqrt{\pi} \left(m \cdot \ldots \cdot (2m-1))\right)^{1/(2m-1)}\\
&\leq \sqrt{\pi} \left(\frac{1}{m}\left(m + \ldots + (2m-1)\right)\right)^{\frac{m}{2m-1}}
= \sqrt{\pi} \left(\frac{1}{m}\frac{m+2m-1}{2}m\right)^{\frac{m}{2m-1}}\\
&= \sqrt{\pi} \left(\frac{3m-1}{2}\right)^{\frac{m}{2m-1}}
= \sqrt{\pi} \left(\frac{3m-1}{2}\right)^{\frac12 +\frac{1}{2(2m-1)}}
\leq \sqrt{\pi}(2m-1)^{\frac12 +\frac{1}{2(2m-1)}}\\
&= \sqrt{\pi}(2m-1)^{\frac12}(2m-1)^{\frac{1}{2(2m-1)}}
= \sqrt{\pi}n^{\frac{1}{2}} \sqrt{n^{\frac1{n}}}
\leq \sqrt{\pi e^{1/e}}\ n^{1/2},
\end{align*}
where we used the fact that $\R\ni x\mapsto x^{1/x}$ has a unique maximum at $x=e^{1/e}$.
This proves~\eqref{eq:growth_of_rhs} for all $n\in\N$, since $\max(\sqrt{\pi},  \sqrt{\pi e^{1/e}})\leq \sqrt{5}$.
\end{remark}

\section{A refined convergence theory for greedy kernel interpolation}\label{sec:new_conv}
In this section we obtain the refined convergence rates for the scale of $\beta$-greedy algorithms, but first we formulate in Section~\ref{sec:weak_greedy} a 
generalized form of the $\beta$-greedy selection, which does not require the compactness, but only the boundedness of the input set.

\subsection{Weak greedy algorithms}\label{sec:weak_greedy}
It has been proven in \cite{Santin2017x} that the $P$-greedy algorithm is the instance of the RB-greedy algorithm that is obtained by setting  
$\calh \coloneqq\ns$ for $\Omega$ a compact set, and considering as a compact set $K$ the image of $\Omega$ under the feature map $\Phi:\Omega\to\calh$, 
$\Phi(x)\coloneqq k(\cdot, x)$, i.e.,
\begin{equation*}
K\coloneqq \Phi(\Omega)\coloneqq\left\{k(\cdot, x):x\in\Omega\right\}\subset\ns.
\end{equation*}
We mention that if $\Phi(\Omega)$ is understood as a dictionary in the sense of greedy approximation, then the set $\conv(\Phi(\Omega))$ is 
usually called its variation space.

Observe that $\Phi(\Omega)$ is in fact only precompact if $\Omega$ is just bounded, and it has been proven in \cite{Wenzel2023Thesis} that it is compact if and 
only
if $\Omega$ is closed.  
However, as observed in Remark~\ref{rem:compactness}, its closed and symmetric convex hull
\begin{align*}
\conv(\Phi(\Omega)) = \clos_{\ns}\left\{\sum_{i} c_i k(\cdot, x_i): \sum_i|c_i|\leq 1\right\}
\end{align*}
is a compact set in $\ns$ for any bounded $\Omega$, compact or not. Thus it follows that our theoretical results will apply also to noncompact bounded $\Omega$.

For this to work, however, we need a selection rule that is well defined for an open and bounded set $\Omega$.
To this end, following \cite{Wenzel2023Thesis} we define the \emph{weak $\beta$-greedy selection} as the one that selects,
for an admissible $\gamma\in(0,1]$ and for $\beta\geq 0$, a point
\begin{equation}\label{eq:weak_beta_greedy}
x_{n+1}\in\left\{x\in\Omega :  \eta_\beta(f,X_n)(x)\geq \gamma\sup\limits_{z\in\Omega} \eta_\beta(f, X_n)(z)\right\},
\end{equation}
where $\eta_\beta$ is defined in \eqref{eq:beta_greedy}.
Here, by \emph{admissible} $\gamma$ we mean $\gamma \in (0,1)$ if $\Omega$ is an open set, while $\gamma\in(0,1]$ for a closed $\Omega$.
Observe that indeed this definition allows us in particular to work with an open set $\Omega\subset\R^d$,
where it holds in fact $\eta_\beta(f,X_n)(x_{n+1})\geq \gamma\max_{z\in\bar\Omega} \eta_\beta(f, X_n)(z)$ whenever $k$ is well defined on the closure $\bar 
\Omega$ of $\Omega$ (see \eqref{eq:rkhs_extension}).
Moreover, note that this notion of weak $\beta$-greedy algorithm deviates from the notion of weak greedy used for the RB-greedy algorithm, and more generally in 
approximation theory (see e.g. \cite{DeVore2013}), since here we
cannot relate to the best-approximation error on the right hand side of the inequality, as it does not necessarily exist.

The idea of overcoming the lack of compactness by moving to a weak selection rule, and the corresponding theoretical investigation, has been introduced in 
\cite{Wenzel2022b} for $P$-greedy ($\beta=0$).

\subsection{Preliminaries}\label{sec:pre}
We follow here the argument of \cite{Wenzel2022c} to arrive at Proposition~\ref{prop:thm8}. This is an update of Theorem 8 
in~\cite{Wenzel2022c} that accounts for a modification required for our proof, in order to deal with a weak selection rule.

This change is provided by the following Lemma~\ref{lemma:weak_beta_greedy}, which is a very simple generalization of Lemma 7 in \cite{Wenzel2022c}.
Since the result follows by simply tracking the term $\gamma$ in the original argument, we postpone its proof to Appendix~\ref{sec:appendix}. 
From here on, we use sometimes the shorthand notation $r_i\coloneqq f - I_{X_i}f$, provided $f\in \ns$, $X_i\subset \Omega$, and $i\in\N_0$ are clear from the 
context.
\begin{lemma}\label{lemma:weak_beta_greedy}
Let $\Omega\subset\R^d$ be bounded and assume that $k$ is well defined on $\bar\Omega$.
Then any weak $\beta$-greedy algorithm with $\beta \in [0,\infty]$ and admissible $\gamma\in(0,1]$, applied to $f \in \ns$, satisfies the following.
\begin{enumerate}
\item If $\beta \in [0, 1]$ then
\begin{equation}\label{eq:bound_ri_first}
\norm{L_\infty(\Omega)}{r_i} 
\leq \gamma^{-1}|r_i(x_{i+1})|^{\beta}  P_{X_i}(x_{i+1})^{1-\beta}  \norm{\calh_k(\bar \Omega)}{r_i}^{1-\beta},\; i\in\N_0.
\end{equation}
\item If $\beta \in (1, \infty]$ (with $1/\infty \coloneqq 0$) then
\begin{equation}\label{eq:bound_ri_second}
\norm{L_\infty(\Omega)}{r_i} 
\leq \gamma^{-1}\frac{|r_i(x_{i+1})|}{P_{X_i}(x_{i+1})^{1-1/\beta}}  \norm{L_\infty(\Omega)}{P_{X_i}}^{1 - 1/\beta},\; i\in\N_0.
\end{equation}
\end{enumerate}
\end{lemma}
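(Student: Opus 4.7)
The plan is to follow the strategy of Lemma 7 in \cite{Wenzel2022c}, but to be careful at the single step where the strong greedy maximality is invoked, replacing it with the weak selection inequality and tracking the resulting $\gamma^{-1}$ factor. Throughout, I would use the extension property \eqref{eq:rkhs_extension}--\eqref{eq:rkhs_embedding} so that $r_i = f - I_{X_i}f$ can be viewed as an element of $\calh_k(\bar\Omega)$ with the same native space norm, so that the pointwise bound $|r_i(z)| \leq P_{X_i}(z) \norm{\calh_k(\bar\Omega)}{r_i}$ from \eqref{eq:power_fun_err_bound} applies to all $z\in\bar\Omega$.

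For case 1 ($\beta\in[0,1]$), the idea is to split $|r_i(z)| = |r_i(z)|^\beta |r_i(z)|^{1-\beta}$ and use the power function bound on the second factor to obtain, for every $z\in\Omega$,
\begin{equation*}
|r_i(z)| \leq |r_i(z)|^\beta P_{X_i}(z)^{1-\beta} \norm{\calh_k(\bar\Omega)}{r_i}^{1-\beta} = \eta_\beta(f,X_i)(z)\,\norm{\calh_k(\bar\Omega)}{r_i}^{1-\beta}.
\end{equation*}
Taking the supremum over $z\in\Omega$ on the left and invoking \eqref{eq:weak_beta_greedy} on the right converts $\sup_z \eta_\beta(f,X_i)(z)$ into $\gamma^{-1}\eta_\beta(f,X_i)(x_{i+1})$, which unfolds into the right-hand side of \eqref{eq:bound_ri_first}.

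For case 2 ($\beta\in(1,\infty]$), I would instead rewrite $|r_i(z)| = \eta_\beta(f,X_i)(z)^{1/\beta} P_{X_i}(z)^{1-1/\beta}$ so that the power function (rather than $\norm{\calh_k(\bar\Omega)}{r_i}$) carries the residual exponent; bounding $P_{X_i}(z)^{1-1/\beta} \leq \norm{L_\infty(\Omega)}{P_{X_i}}^{1-1/\beta}$ and taking $\sup_z$ yields
\begin{equation*}
\norm{L_\infty(\Omega)}{r_i} \leq \gamma^{-1/\beta}\,\eta_\beta(f,X_i)(x_{i+1})^{1/\beta}\,\norm{L_\infty(\Omega)}{P_{X_i}}^{1-1/\beta},
\end{equation*}
after applying \eqref{eq:weak_beta_greedy}. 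Substituting the definition of $\eta_\beta$ gives $\eta_\beta(f,X_i)(x_{i+1})^{1/\beta} = |r_i(x_{i+1})|/P_{X_i}(x_{i+1})^{1-1/\beta}$, and the limiting case $\beta=\infty$ follows by the same manipulation with $\eta_\infty$. The $\beta=\infty$ case actually degenerates more simply since $\eta_\infty(f,X_i)(z)\geq |r_i(z)|$ when $P_{X_i}(z)\leq 1$, so I would note this separately.

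The only real bookkeeping point — and the place where a reader might want to verify carefully — is that the weak inequality in \eqref{eq:weak_beta_greedy} is applied to $\eta_\beta$, not to $\eta_\beta^{1/\beta}$, so in case 2 one a priori gets a factor $\gamma^{-1/\beta}$ rather than $\gamma^{-1}$. Since $\gamma\in(0,1]$ and $1/\beta\in(0,1)$, one has $\gamma^{-1/\beta}\leq \gamma^{-1}$, which is what allows the unified constant $\gamma^{-1}$ in \eqref{eq:bound_ri_second}. No other step presents any difficulty: the rest is algebraic manipulation of the definition of $\eta_\beta$ together with the power function inequality.
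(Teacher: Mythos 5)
Your proof is correct and follows essentially the same strategy as the paper: apply the power function bound from \eqref{eq:power_fun_err_bound} to the residual, combine with the weak selection inequality \eqref{eq:weak_beta_greedy}, and carefully track the factor $\gamma^{-1}$ (or $\gamma^{-1/\beta}$, which you correctly note is then bounded by $\gamma^{-1}$ since $\gamma\leq 1$). For case 2 and $\beta=\infty$ your manipulations coincide with the paper's. For case 1 your presentation is actually a bit cleaner than the paper's: by splitting $|r_i(z)|=|r_i(z)|^\beta|r_i(z)|^{1-\beta}$, bounding the second factor via the power function, and only then taking the supremum and applying the weak selection, you avoid the paper's detour of introducing $\bar x\coloneqq\argmax_{x\in\bar\Omega}|r_i(x)|$, deriving an intermediate bound on $P_{X_i}(\bar x)$, substituting back, and then rearranging to isolate $\norm{L_\infty(\Omega)}{r_i}$. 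The algebraic content is the same; you just never need the argmax to be attained, only the supremum.
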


\begin{remark}
In the proof of this lemma (see Appendix \ref{sec:appendix}) we actually prove that \eqref{eq:bound_ri_second} holds even with a
term $\gamma^{-1/\beta}$ in place of $\gamma^{-1}$. However, this is rather due to a scaling issue in the definition of the selection rule 
~\eqref{eq:weak_beta_greedy}, which does not normalize the value of $\eta_\beta$ for different $\beta$.
To avoid dealing with this issue and for coherence with~\cite{Wenzel2022c}, we keep here the simpler term $\gamma^{-1}$. 
We refer however to~\cite{Wenzel2023Thesis} for a different parametrization of the scale of $\beta$-greedy algorithms for $\beta>1$, which 
leads to a different scaling of this factor.
\end{remark}

With this lemma we can get the required update of Theorem 8 in \cite{Wenzel2022c}.
\begin{proposition}\label{prop:thm8}
Let $\Omega\subset\R^d$ be bounded, and assume that $k$ is well defined on $\bar\Omega$ and \eqref{eq:rkhs_embedding} holds.
Then any weak $\beta$-greedy algorithm with $\beta \in [0,\infty]$ and admissible $\gamma\in(0,1]$, applied to a function $f \in \ns$, satisfies
\begin{align} \label{eq:final_result_1}
\left(\prod_{i=n+1}^{2n} \norm{L_\infty(\Omega)}{r_i} \right)^\frac1n
\leq \gamma^{-1} n^{-\frac{\min(\beta,1)}{2}} \norm{\ns}{r_{n+1}}\left(\prod_{i=n+1}^{2n} P_{X_i}(x_{i+1})^{\frac1{\max(\beta,
1)}} \right)^\frac1n,\;\; n\in\N.
\end{align}
\end{proposition}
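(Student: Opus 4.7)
The plan is to adapt the strategy of Theorem~8 in \cite{Wenzel2022c}, with two modifications: carrying the weak-greedy factor $\gamma$ throughout, and handling the two case-splits of Lemma~\ref{lemma:weak_beta_greedy} uniformly. In both cases I would start from the pointwise bound on $\norm{L_\infty(\Omega)}{r_i}$ given by that lemma and rewrite it as $\gamma^{-1}$ times a product of $(|r_i(x_{i+1})|/P_{X_i}(x_{i+1}))^s$, $P_{X_i}(x_{i+1})^t$ and a uniformly controlled auxiliary factor. For $\beta\in[0,1]$ the exponents are $s=\beta$, $t=1$, and the auxiliary factor is $\norm{\ns}{r_i}^{1-\beta}$, which is bounded by $\norm{\ns}{r_{n+1}}^{1-\beta}$ since the residual norms are non-increasing along the nested sequence $X_{n+1}\subset\dots\subset X_{2n}$. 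For $\beta\in(1,\infty]$ they are $s=1$, $t=1/\beta$, and the auxiliary factor $\norm{L_\infty(\Omega)}{P_{X_i}}^{1-1/\beta}$ is at most $1$ by~\eqref{eq:kernel_normalization}. Notice that $t=1/\max(\beta,1)$ already matches the target exponent on $P_{X_i}(x_{i+1})$ in both cases.

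The central ingredient is an upper bound for the product $\prod_{i=n+1}^{2n}|r_i(x_{i+1})|/P_{X_i}(x_{i+1})$, which I would derive in two steps. First, the Newton basis expansion~\eqref{eq:norm_as_expansion} applied to $I_{X_{2n+1}}f - I_{X_{n+1}}f$ identifies $\norm{\ns}{I_{X_{2n+1}}f - I_{X_{n+1}}f}^2$ with exactly $\sum_{i=n+1}^{2n}|r_i(x_{i+1})|^2/P_{X_i}(x_{i+1})^2$, and Pythagoras applied to the decomposition $r_{n+1}=(I_{X_{2n+1}}f-I_{X_{n+1}}f)+r_{2n+1}$ (orthogonal in $\ns$, since the first summand lies in $V(X_{2n+1})$ while $r_{2n+1}\perp V(X_{2n+1})$) yields $\sum_{i=n+1}^{2n}|r_i(x_{i+1})|^2/P_{X_i}(x_{i+1})^2 \leq \norm{\ns}{r_{n+1}}^2$. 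Second, AM--GM applied to these $n$ squared terms gives $\prod_{i=n+1}^{2n}|r_i(x_{i+1})|/P_{X_i}(x_{i+1}) \leq n^{-n/2}\norm{\ns}{r_{n+1}}^n$.

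Assembling everything, taking the product of the $n$ pointwise bounds from $i=n+1$ to $2n$ and then the $n$-th root produces $\gamma^{-1}$ (from $\gamma^{-n}$), the factor $n^{-s/2}=n^{-\min(\beta,1)/2}$ (the AM--GM contribution raised to the exponent $s$), the factor $\norm{\ns}{r_{n+1}}$ (since the powers of this term sum to $n\beta+n(1-\beta)=n$ in case $\beta\leq 1$ and to $n$ directly in case $\beta>1$), and the surviving term $\left(\prod_{i=n+1}^{2n} P_{X_i}(x_{i+1})^{1/\max(\beta,1)}\right)^{1/n}$. I expect the only real obstacle to be the bookkeeping of exponents: checking that in each case the total power of $\norm{\ns}{r_{n+1}}$ inside the product is exactly $n$, so that a single factor appears in the final bound, and that the two expressions involving $\min$ and $\max$ of $\beta$ and $1$ are the clean common form for the two regimes.
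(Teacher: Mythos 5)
Your proposal is correct and follows essentially the same route as the paper: it combines the pointwise bounds of Lemma~\ref{lemma:weak_beta_greedy} (with the $\gamma^{-1}$ factor tracked, $\norm{\ns}{r_i}\leq\norm{\ns}{r_{n+1}}$ for $\beta\leq 1$, and $\norm{L_\infty(\Omega)}{P_{X_i}}\leq 1$ for $\beta>1$) with the product bound on $|r_i(x_{i+1})|/P_{X_i}(x_{i+1})$, and the exponent bookkeeping works out exactly as you describe. The only difference is that you re-derive that product bound via the Newton-basis identity \eqref{eq:norm_as_expansion}, Pythagoras, and AM--GM, whereas the paper simply cites it as Lemma~6 of \cite{Wenzel2022c}; your derivation is the standard proof of that lemma, so this is a self-contained variant rather than a genuinely different argument.
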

\begin{proof}
We use Lemma~6 in \cite{Wenzel2022c}, which states that
\begin{equation}\label{eq:lemma6_in_wenzel2022c}
\left(\prod_{i=n+1}^{2n} \left|r_i(x_{i+1})\right| \right)^{1/n}
\leq n^{-1/2} \norm{\ns}{r_{n+1}} \left(\prod_{i=n+1}^{2n} P_{X_i}(x_{i+1})\right)^{1/n}.
\end{equation}
We split the proof in two parts.
\begin{enumerate}
\item 
For $\beta \leq 1$ we use equation \eqref{eq:bound_ri_first} in Lemma~\ref{lemma:weak_beta_greedy}, and \eqref{eq:rkhs_embedding} to write
\begin{align*}
\norm{{L_\infty(\Omega)}}{r_i}
&\leq \gamma^{-1} |r_i(x_{i+1})|^{\beta} P_{X_i}(x_{i+1})^{1-\beta} \norm{\calh_k(\bar\Omega)}{r_i}^{1-\beta}\\
&\leq  \gamma^{-1} |r_i(x_{i+1})|^{\beta} P_{X_i}(x_{i+1})^{1-\beta} \norm{\ns}{r_i}^{1-\beta}, \;1\leq i \leq n.
\end{align*}
Taking the geometric mean of both sides for $i=n+1, \dots, 2n$, and using~\eqref{eq:lemma6_in_wenzel2022c}, we get
\begin{align*}
\left(\prod_{i=n+1}^{2n} \norm{L_\infty(\Omega)}{r_i} \right)^{\frac1n}
&\leq  \gamma^{-1} n^{-\beta/2} \norm{\ns}{r_{n+1}}^\beta \left(\prod_{i=n+1}^{2n} P_{X_i}(x_{i+1}) \right)^\frac{\beta}{n}
\left(\prod_{i=n+1}^{2n} P_{X_i}(x_{i+1})^{1-\beta} \right)^\frac{1}{n}\\
&\quad \cdot\left(\prod_{i=n+1}^{2n} \norm{\ns}{r_i}^{1-\beta}\right)^{\frac1n}\\
&\leq \gamma^{-1} n^{-\beta/2} \norm{\ns}{r_{n+1}} \left(\prod_{i=n+1}^{2n} P_{X_i}(x_{i+1}) \right)^\frac{1}{n},
\end{align*}
where we used the fact that $\norm{\ns}{r_i}\leq\norm{\ns}{r_{n+1}}$ for all $n+1\leq i\leq 2n$. This proves the statement for $\beta \in[0,1]$.

\item For $\beta > 1$ we proceed similarly using equation \eqref{eq:bound_ri_second} in Lemma~\ref{lemma:weak_beta_greedy}, to get
\begin{equation*}
\norm{L_\infty(\Omega)}{r_i} 
\leq \gamma^{-1}\frac{|r_i(x_{i+1})|}{P_{X_i}(x_{i+1})^{1-1/\beta}} \norm{L_\infty(\Omega)}{P_{X_i}}^{1 - 1/\beta}
\leq \gamma^{-1}\frac{|r_i(x_{i+1})|}{P_{X_i}(x_{i+1})^{1-1/\beta}},
\end{equation*}
where $\norm{L_\infty(\Omega)}{P_{X_i}} \leq 1$ by \eqref{eq:kernel_normalization}. Taking again the same geometric mean, and using 
again~\eqref{eq:lemma6_in_wenzel2022c}, we obtain
\begin{align*}
\left(\prod_{i=n+1}^{2n} \norm{L_\infty(\Omega)}{r_i} \right)^{1/n} 
&\leq \gamma^{-1} n^{-1/2} \norm{\ns}{r_{n+1}} \left(\prod_{i=n+1}^{2n} P_{X_i}(x_{i+1}) \right)^{1/n} \left(\prod_{i=n+1}^{2n} 
\frac{1}{P_{X_i}(x_{i+1})^{1-1/\beta}} 
\right)^{1/n}\\
&= \gamma^{-1} n^{-1/2} \norm{\ns}{r_{n+1}} \left(\prod_{i=n+1}^{2n} P_{X_i}(x_{i+1})^{1/\beta}\right)^{1/n},
\end{align*}
which gives the desired result for $\beta\in(1,\infty]$.
\end{enumerate}
\end{proof}

\subsection{Convergence in terms of entropy numbers}\label{sec:convergence_ent}
We can now combine the results of the previous section with Lemma~\ref{lemma:product} and obtain the new convergence rates.
\begin{theorem}\label{thm:new_conv_rates}
Let $\Omega\subset\R^d$ be bounded, and assume that $k$ is well defined on $\bar\Omega$ and \eqref{eq:rkhs_embedding} holds.
Let $\varepsilon_{n}\coloneqq \varepsilon_n\left(\conv(\Phi(\Omega)),\ns\right)$. 

Then any weak $\beta$-greedy algorithm with $\beta \in [0,\infty]$ and admissible $\gamma\in(0,1]$, applied to a function $f \in \ns$, satisfies
\begin{equation*}
\min\limits_{n+1\leq i \leq 2n} \norm{L_\infty(\Omega)}{f - I_{X_i} f}
\leq \gamma^{-1}\left(\sqrt{5} n^{\frac{1-\beta}{2}} \varepsilon_{n}\right)^{\frac{1}{\max(1,\beta)}}\norm{\ns}{f - I_{X_{n+1}} f},
\;\;n\in\N.
\end{equation*}
\end{theorem}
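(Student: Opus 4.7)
The plan is to combine Proposition~\ref{prop:thm8} (which bounds a geometric mean of $L_\infty$-residuals by a product of power function values times $\|r_{n+1}\|_{\ns}$) with Lemma~\ref{lemma:product} (which bounds a geometric mean of projection distances in $\ns$ by the entropy number $\varepsilon_n$). The bridge between the two is the identity in \eqref{eq:power_as_proj}, which reads
\begin{equation*}
P_{X_i}(x_{i+1}) = \norm{\ns}{k(\cdot,x_{i+1}) - \Pi_{V(X_i)} k(\cdot,x_{i+1})},
\end{equation*}
so that each power function value at the greedily selected point is precisely the projection error of a specific element of $\Phi(\Omega)$ onto the span of previously selected elements. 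This is exactly the quantity handled by Lemma~\ref{lemma:product} when we take $K\coloneqq\Phi(\Omega)$ (which is precompact in $\ns$ since $\Omega$ is bounded, as noted in Section~\ref{sec:weak_greedy}, so the precompact version of the lemma applies) and the vectors $v_i\coloneqq k(\cdot,x_i)$ generated by the greedy selection.

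The first step is to observe that
\begin{equation*}
\min_{n+1\leq i\leq 2n}\norm{L_\infty(\Omega)}{r_i} \leq \left(\prod_{i=n+1}^{2n}\norm{L_\infty(\Omega)}{r_i}\right)^{1/n},
\end{equation*}
so that Proposition~\ref{prop:thm8} applies directly to upper bound the minimum. The resulting inequality involves the factor $\bigl(\prod_{i=n+1}^{2n} P_{X_i}(x_{i+1})\bigr)^{1/n}$ raised to the power $1/\max(\beta,1)$. The second step is to bound this product by a shifted instance of Lemma~\ref{lemma:product}: substituting $j=i+1$ turns the index range $i\in\{n+1,\dots,2n\}$ into $j\in\{n+2,\dots,2n+1\}$, and since $V(X_{j-1})=\Sp\{k(\cdot,x_1),\dots,k(\cdot,x_{j-1})\}$, Lemma~\ref{lemma:product} applied with $m=n+2$ (here is precisely where the generalized indexing in the lemma matters) yields
\begin{equation*}
\left(\prod_{i=n+1}^{2n} P_{X_i}(x_{i+1})\right)^{1/n} \leq (n!\,S_n)^{1/n}\,\varepsilon_n \leq \sqrt{5}\,n^{1/2}\,\varepsilon_n,
\end{equation*}
where the last inequality is Remark~\ref{rem:constant}.

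The final step is to plug this back into Proposition~\ref{prop:thm8} and verify the exponent bookkeeping in the two cases. For $\beta\in[0,1]$, $\max(\beta,1)=1$ and $\min(\beta,1)=\beta$, so the $n^{-\beta/2}$ from the proposition combines with $n^{1/2}$ from the entropy bound to produce $n^{(1-\beta)/2}\varepsilon_n$, and $\max(\beta,1)=1$ makes raising to $1/\max(\beta,1)$ trivial. For $\beta\in(1,\infty]$, $\max(\beta,1)=\beta$ and $\min(\beta,1)=1$, and one checks that
\begin{equation*}
n^{-1/2}\bigl(\sqrt{5}\,n^{1/2}\varepsilon_n\bigr)^{1/\beta} = \bigl(\sqrt{5}\,n^{(1-\beta)/2}\varepsilon_n\bigr)^{1/\beta},
\end{equation*}
which matches the target formula. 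Collecting the $\gamma^{-1}$ factor from Proposition~\ref{prop:thm8} gives the statement. The main (minor) obstacle is getting the index shift right when invoking Lemma~\ref{lemma:product}, since the proposition naturally produces a product starting at $i=n+1$ with index $x_{i+1}$, and this is exactly the situation the generalization of the lemma from $m=1$ to arbitrary $m$ was designed to handle.
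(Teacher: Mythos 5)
Your proposal is correct and follows essentially the same route as the paper: bound the minimum by the geometric mean, invoke Proposition~\ref{prop:thm8}, identify $P_{X_i}(x_{i+1})$ with projection errors of $k(\cdot,x_{i+1})$ via \eqref{eq:power_as_proj}, and apply the shifted Lemma~\ref{lemma:product} together with Remark~\ref{rem:constant}, finishing with the same exponent bookkeeping in the two cases $\beta\leq 1$ and $\beta>1$. The only divergence is the exact value of the shift parameter ($m=n+2$ in your indexing versus $m=n$ as written in the paper), which is immaterial since the lemma holds for any $m$, and your accounting of the index shift is consistent.
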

\begin{proof}
The proof follows the argument of Corollary 11 in \cite{Wenzel2022c}, with the modification that we replace Theorem 8 in \cite{Wenzel2022c} with 
Proposition~\ref{prop:thm8}, and Corollary 2 in \cite{Wenzel2022c} with Lemma~\ref{lemma:product}.

First we observe that
\begin{equation*}
\min\limits_{n+1\leq i \leq 2n} \norm{L_\infty(\Omega)}{r_i} 
\leq \left(\prod_{i=n+1}^{2n} \norm{L_\infty(\Omega)}{r_i} \right)^{1/n},
\end{equation*}
and thus Proposition~\ref{prop:thm8} gives
\begin{align*}
\min\limits_{n+1\leq i \leq 2 n} \norm{L_\infty(\Omega)}{r_i}
\leq  \gamma^{-1} n^{-\min(1, \beta)/2}  \norm{\ns}{r_{n+1}} \left(\prod_{i=n+1}^{2n} P_{X_i}(x_{i+1}) \right)^{1/(n\max(1, \beta))}.
\end{align*}
We now set $K\coloneqq \Phi(\Omega)$ and $v_i\coloneqq k(\cdot, x_i)\in K$, with $x_1, \dots, x_{2n}$ selected by the greedy algorithm, so that 
$V_i\coloneqq\Sp\{v_j, 
1\leq j\leq i\}=V(X_i)$. It follows from \eqref{eq:power_as_proj} that
\begin{equation*}
\norm{\ns}{v_i - \Pi_{V_{i-1}} v_i} 
=
\norm{\ns}{k(\cdot, x_i) - \Pi_{V(X_{i-1})} k(\cdot, x_i)}
= P_{X_{i-1}}(x_i),
\end{equation*}
and thus Lemma~\ref{lemma:product} with $m = n$, and Remark~\ref{rem:constant}, give
\begin{equation*}
\left(\prod_{i=n+1}^{2n} P_{X_{i}}(x_{i+1}) \right)^{1/n}
=\left(\prod_{i=n}^{2n-1} P_{X_{i-1}}(x_{i}) \right)^{1/n}
\leq \sqrt{5} n^{1/2} \varepsilon_n.
\end{equation*}
Combining these bounds leads to the estimate
\begin{align*}
\min\limits_{n+1\leq i \leq 2n} \norm{L_\infty(\Omega)}{r_i}
&\leq \gamma^{-1} n^{-\min(1,\beta)/2} \norm{\ns}{r_{n+1}} \left(\sqrt{5} n^{1/2} \varepsilon_n\right)^{1/\max(1,\beta)}\\
&= \gamma^{-1}\norm{\ns}{r_{n+1}} \left(\sqrt{5} n^{-\min(1,\beta)\max(1,\beta)/2} n^{1/2}
\varepsilon_n\right)^{1/\max(1,\beta)}\nonumber\\
&= \gamma^{-1} \norm{\ns}{r_{n+1}} \left(\sqrt{5} n^{(1-\beta)/2} \varepsilon_n\right)^{1/\max(1,\beta)},\nonumber
\end{align*}
as in the statement.
\end{proof}

The result of Theorem~\ref{thm:new_conv_rates} is applicable to a broad class of kernels, and it is of interest in itself as it covers also the case of a weak 
algorithm on a non-compact set $\Omega$, and because it provides an explicit relation of the greedy interpolation error with the decay of an entropy number. 
We combine it with estimates on the decay of $\varepsilon_n$ for specific kernels in the next section.

\subsection{Entropy numbers of the kernel dictionary and algebraic error rates}\label{sec:entropy}
It remains to provide estimates for $\varepsilon_n(\conv(\Phi(\Omega), \ns)$, for which we use the theory of \emph{smoothly parametrized dictionaries} in the 
sense 
of \cite{Siegel2022b}. We recall here the fundamental definitions and use them to prove a decay of these entropy numbers.

For a function $f:\Omega \to\R$, $m\in\N_0$, $\alpha\in(0,1]$, $s\coloneqq m+\alpha$, we write $f\in \Lip(s, L_\infty(\Omega))$ if its Lipschitz seminorm
\begin{equation*}
\seminorm{\Lip(s,L_\infty(\Omega))}{f}\coloneqq \max\limits_{|\zeta|=m}\sup_{y\neq z, y,z\in\Omega} \frac{\left|D^\zeta f(y) - D^\zeta
f(z)\right|}{|y-z|^\alpha}
\end{equation*}
is bounded (see e.g. \cite{Lorentz1996a}). This notion extends to $\ns$-valued maps $\mathcal F:\Omega\to\ns$ (Definition 2 in
\cite{Siegel2022b}), where we say that $\mathcal F$ is of smoothness class $\Lip_\infty(s, \ns)$ if there exists $C>0$ such that, for every $u\in 
\ns$, the function 
$f_u:\Omega\to\R$ defined by
\begin{equation*}
f_u(x)\coloneqq\inner{\ns}{u, \mathcal F(x)},\; x\in\Omega,
\end{equation*}
satisfies
\begin{equation*}
\seminorm{\Lip(s, L_\infty(\Omega))}{f_u} \leq C \norm{\ns}{u}.
\end{equation*}
Dictionaries that are contained in the image of such an $\mathcal F$ are named \emph{smoothly parametrized} (Section 3 in \cite{Siegel2022b}), and rates of 
decay of the entropy number of their variation space is provided in Theorem 4 of \cite{Siegel2022b}. Collecting all these facts, we prove the following result 
for our case of interest, where we recall from Section~\ref{sec:weak_greedy} that $\Phi:\Omega\to\ns$, $\Phi(x)\coloneqq k(\cdot, x)$, $x\in\Omega$.

\begin{proposition}\label{prop:entropy_as_lip}
Let $\Omega\subset \R^d$ and let $s>0$. 
Then $\Phi\in\Lip_\infty(s, \ns)$ if and only if $\ns$ is 
continuously embedded\footnote{By this we mean that the space is contained into $\Lip(s, L_\infty(\Omega))$, and the inequality~\eqref{eq:lip_embedding} holds. 
This is a slight
abuse of the notion, since $\Lip$ is not a normed space.} into 
$\Lip(s, L_\infty(\Omega))$, i.e., there exists an embedding constant $c_e>0$ such that
\begin{equation}\label{eq:lip_embedding}
\seminorm{\Lip(s, L_\infty(\Omega))}{u} \leq c_e \norm{\ns}{u}\;\;\fa u\in \ns.
\end{equation}
In this case, if the boundary of $\Omega$ is sufficiently smooth (say, $C^\infty$), there are constants $C, C'>0$ such that
\begin{equation}\label{eq:entropy_decay}
\varepsilon_n(\conv(\Phi(\Omega)),\ns) \leq C n^{-\frac s d-\frac12},\;\; n\in\N,
\end{equation}
and
\begin{equation}\label{eq:kolmog_decay}
d_n(\conv(\Phi(\Omega)),\ns) \leq C' n^{-\frac s d},\;\; n\in\N.
\end{equation}
\end{proposition}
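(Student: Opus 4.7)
The plan is to reduce the stated equivalence to the reproducing property of $\ns$, and then to invoke the entropy decay result of \cite{Siegel2022b} for smoothly parametrized dictionaries in order to obtain \eqref{eq:entropy_decay} and \eqref{eq:kolmog_decay}.

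First, I would unfold the definition of $\Phi \in \Lip_\infty(s, \ns)$ using the reproducing property of $\ns$. For every $u \in \ns$ and $x \in \Omega$, the definition of $\Phi$ and the reproducing property yield
\[
f_u(x) = \inner{\ns}{u, \Phi(x)} = \inner{\ns}{u, k(\cdot, x)} = u(x),
\]
so $f_u = u$ pointwise on $\Omega$, and in particular $\seminorm{\Lip(s, L_\infty(\Omega))}{f_u} = \seminorm{\Lip(s, L_\infty(\Omega))}{u}$ whenever either side makes sense. Consequently, the condition $\seminorm{\Lip(s, L_\infty(\Omega))}{f_u} \leq C \norm{\ns}{u}$ for all $u \in \ns$ is \emph{literally} the condition $\seminorm{\Lip(s, L_\infty(\Omega))}{u} \leq C \norm{\ns}{u}$ for all $u \in \ns$, which is \eqref{eq:lip_embedding} with $c_e = C$. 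Reading this identity left to right gives one implication, and right to left the other; no regularity assumption on $\partial\Omega$ is needed for this part.

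Second, assuming the equivalent condition $\Phi \in \Lip_\infty(s, \ns)$, the set $\Phi(\Omega) \subset \ns$ is, by definition, a smoothly parametrized dictionary of smoothness $s$ on the parameter domain $\Omega \subset \R^d$, in the sense of Section~3 of \cite{Siegel2022b}. Under the assumed $C^\infty$ regularity of $\partial\Omega$, the hypotheses of Theorem~4 of \cite{Siegel2022b} are met, and a direct application of that theorem to $\Phi(\Omega)$ yields the entropy decay \eqref{eq:entropy_decay} for $\conv(\Phi(\Omega))$ as well as the Kolmogorov width decay \eqref{eq:kolmog_decay}, with constants $C, C' > 0$ that depend on $c_e$, the parameters $s$ and $d$, and $\Omega$.

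The main obstacle is essentially bookkeeping rather than analysis: one must check that the notion of smoothly parametrized dictionary used in \cite{Siegel2022b} unfolds exactly to our $\Lip_\infty(s, \ns)$ condition (which is precisely the point of the first paragraph above), that the exponents $-s/d - 1/2$ and $-s/d$ coincide with those of the entropy and Kolmogorov width bounds proven in Theorem~4 of \cite{Siegel2022b}, and that the $C^\infty$ assumption on $\partial\Omega$ is a sufficient form of the ambient domain regularity required there. Once these identifications are in place, no further computation is needed.
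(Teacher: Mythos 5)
Your proposal matches the paper's proof: the reproducing property gives $f_u = u$, which makes the equivalence between $\Phi \in \Lip_\infty(s, \ns)$ and \eqref{eq:lip_embedding} immediate, and the decay bounds then follow by citing \cite{Siegel2022b}. One small bookkeeping slip: the paper credits the entropy bound \eqref{eq:entropy_decay} to Theorem~4 of \cite{Siegel2022b} and the Kolmogorov width bound \eqref{eq:kolmog_decay} to Theorem~5 of that reference, whereas you attribute both to Theorem~4.
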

\begin{proof}
This is a straightforward calculation, since from the definition of $\Phi$, for all $u\in\ns$ it holds
\begin{equation*}
f_u(x)\coloneqq\inner{\ns}{u, \Phi(x)} = \inner{\ns}{u, k(\cdot, x)} = u(x),\; x\in\Omega,
\end{equation*}
and thus clearly 
$\seminorm{\Lip(s,L_\infty(\Omega))}{f_u}
\leq c_e \norm{\ns}{u}
$
if and only if 
$
\seminorm{\Lip(s,L_\infty(\Omega))}{u}
\leq c_e \norm{\ns}{u}
$.

If now $\Phi\in\Lip_\infty(s, \ns)$, Theorem 4 in \cite{Siegel2022b} provides the bound \eqref{eq:entropy_decay}, and Theorem 5 in \cite{Siegel2022b} 
provides the bound \eqref{eq:kolmog_decay}.
\end{proof}
\begin{remark}
The smoothness requirement on the boundary of $\Omega$ are not exactly specified in \cite{Siegel2022b}, even if they are related to the polynomial 
approximability of $\Phi$, and they affect in this respect the constants $C, C'$. We are not interested here in analyzing this issue more precisely than this, 
so we simply assume that $\Omega$ has a $C^\infty$ boundary and refer to the proof of Theorem 4 in \cite{Siegel2022b} for further details.
\end{remark}

\begin{remark}
Other approaches (see \cite{Gao2001}) allow one to obtain estimates on the decay of $\varepsilon_n(\conv(\Phi(\Omega)),\ns)$ from the decay of 
$\varepsilon_n(\Phi(\Omega),\ns)$, which needs however to be in turn estimated in some way. The method presented here seems more suitable for our goals, as it 
simply relates the decay to the smoothness of $k$.
\end{remark}

Combining Proposition~\ref{prop:entropy_as_lip} and Theorem~\ref{thm:new_conv_rates} gives our main result.
\begin{corollary}\label{thm:new_conv_rates_explicit}
Under the hypotheses of Theorem~\ref{thm:new_conv_rates} and Proposition~\ref{prop:entropy_as_lip}, let $s>0$ and assume that $\ns$ is continuously embedded 
into $\Lip(s,
L_\infty(\Omega))$.

Then any weak $\beta$-greedy algorithm with $\beta \in [0,\infty]$ and admissible $\gamma\in(0,1]$, applied to a function $f \in
\ns$, satisfies
\begin{equation*}
\min\limits_{n+1\leq i \leq 2 n} \norm{L_\infty(\Omega)}{f - I_{X_i} f}
\leq c\ \gamma^{-1}\left(n^{-\frac{s}{d}-\frac{\beta}{2}}\right)^{\frac{1}{\max(1,\beta)}}\norm{\ns}{f-I_{X_{n+1}}f}, \;\;n\in\N,
\end{equation*}
where $c\coloneqq \left(C \sqrt{5}\right)^{\frac{1}{\max(1,\beta)}}$ and $C$ is the constant of \eqref{eq:entropy_decay}.
\end{corollary}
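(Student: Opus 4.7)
The plan is to combine the two ingredients essentially by direct substitution, since the assumptions of this corollary are precisely the union of the hypotheses of Theorem~\ref{thm:new_conv_rates} and Proposition~\ref{prop:entropy_as_lip}. The main observation is that the exponent algebra collapses cleanly, so no further technique is required beyond the two quoted results.

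First I would invoke Theorem~\ref{thm:new_conv_rates} applied to the target $f\in\ns$ and the weak $\beta$-greedy sequence, which gives the bound
\begin{equation*}
\min_{n+1\leq i\leq 2n}\norm{L_\infty(\Omega)}{f-I_{X_i}f}
\leq \gamma^{-1}\left(\sqrt{5}\, n^{(1-\beta)/2}\,\varepsilon_n\right)^{1/\max(1,\beta)}\norm{\ns}{f-I_{X_{n+1}}f}
\end{equation*}
with $\varepsilon_n=\varepsilon_n(\conv(\Phi(\Omega)),\ns)$. Next I would use the continuous embedding $\ns\hookrightarrow\Lip(s,L_\infty(\Omega))$ provided by assumption to verify, via Proposition~\ref{prop:entropy_as_lip}, that $\Phi\in\Lip_\infty(s,\ns)$ and hence that $\varepsilon_n\leq C\, n^{-s/d-1/2}$.

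Substituting this decay into the previous display, the base of the $1/\max(1,\beta)$-th power becomes
\begin{equation*}
\sqrt{5}\, n^{(1-\beta)/2}\cdot C\, n^{-s/d-1/2}
= C\sqrt{5}\, n^{-s/d-\beta/2},
\end{equation*}
so the factor in front of $\norm{\ns}{f-I_{X_{n+1}}f}$ becomes $\gamma^{-1}(C\sqrt{5})^{1/\max(1,\beta)}(n^{-s/d-\beta/2})^{1/\max(1,\beta)}$, which is exactly the stated bound with $c=(C\sqrt{5})^{1/\max(1,\beta)}$.

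There is no real obstacle: the proof is a one-line substitution once one checks that the hypotheses of Proposition~\ref{prop:entropy_as_lip} are met (in particular that the smoothness assumption on $\partial\Omega$ is inherited, as noted in the remark following the proposition) and that the exponent simplification $(1-\beta)/2+(-s/d-1/2)=-s/d-\beta/2$ is valid uniformly for all $\beta\in[0,\infty]$, including the limiting case $\beta=\infty$ (where $1/\max(1,\beta)=0$, so the right-hand side degenerates to $\gamma^{-1}\norm{\ns}{f-I_{X_{n+1}}f}$, consistent with the trivial bound). The only point that warrants a brief comment is that the constant $c$ depends on $\beta$ through the exponent, but is uniformly bounded on compact subsets of $[0,\infty]$, so no subtlety arises in the statement.
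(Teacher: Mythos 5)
Your proposal is correct and coincides with the paper's own argument: the corollary is obtained exactly by inserting the entropy decay $\varepsilon_n\leq C\,n^{-s/d-1/2}$ from Proposition~\ref{prop:entropy_as_lip} into Theorem~\ref{thm:new_conv_rates} and simplifying the exponent to $-s/d-\beta/2$. Your side remark on $\beta=\infty$ reads the bound literally with $1/\max(1,\beta)=0$ (giving the trivially valid degenerate estimate), which is consistent with the stated convention, so nothing further is needed.
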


The condition that $\ns$ is continuously embedded into $\Lip(s, L_\infty(\Omega))$ has notable examples. 
Namely, it clearly holds $C^{m,\alpha}(\Omega)\hookrightarrow \Lip(s, L_\infty(\Omega))$ for $m=\lfloor s\rfloor$ and $\alpha = s-m$, since the space has a norm
\begin{equation*}
\norm{C^{m,\alpha}(\Omega)}{u} = \norm{C^{m}(\Omega)}{u} + \seminorm{\Lip(s, L_\infty(\Omega))}{u}, \;\;u\in C^{m,\alpha}(\Omega).
\end{equation*}
The embedding $\ns\hookrightarrow C^{m,\alpha}(\Omega)$ exists if $k\in C^{2m, 2\alpha}(\Omega)$, as can be seen with an explicit computation and using the 
reproducing property, and thus in this case $\ns\hookrightarrow \Lip(s,L_\infty(\Omega))$.
Moreover, if $\ns$ is continuously embedded into $W_2^\tau(\Omega)$ (i.e., the right inequality in~\eqref{eq:sobolev_embedding} holds) and $\Omega$
has a sufficiently smooth boundary, then the Sobolev embedding theorem guarantees that $W_2^\tau(\Omega)\hookrightarrow C^{m,\alpha}(\Omega)$ if $\tau>d/2$ and 
$s\coloneqq 
m+\alpha = \tau - d/2$ (i.e., $m\coloneqq \lfloor\tau-d/2\rfloor$, $\alpha\coloneqq \tau - d/2 - m$). We state this fact as the following corollary.
\begin{corollary}\label{thm:new_conv_rates_explicit_sobolev}
Let $\Omega\subset \R^d$ have a Lipschitz boundary, let $\tau>d/2$, and assume that $\ns$ is continuously embedded into $W_2^\tau(\Omega)$.
Then, under the assumptions of Corollary~\ref{thm:new_conv_rates_explicit} and Proposition~\ref{prop:entropy_as_lip}, for all $f\in \ns$ it holds
\begin{equation*}
\min\limits_{n+1\leq i \leq 2 n} \norm{L_\infty(\Omega)}{f - I_{X_i} f}
\leq c\ \gamma^{-1}\left(n^{-\frac{\tau}{d}+\frac{1-\beta}{2}}\right)^{\frac{1}{\max(1,\beta)}}\norm{\ns}{f-I_{X_{n+1}}f}, \;\;n\in\N,
\end{equation*}
where $c>0$ is as in Corollary~\ref{thm:new_conv_rates_explicit}.
\end{corollary}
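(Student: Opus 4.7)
The plan is to reduce the statement directly to Corollary~\ref{thm:new_conv_rates_explicit} by identifying the smoothness parameter $s$ for which $\ns$ is continuously embedded into $\Lip(s,L_\infty(\Omega))$, and then substituting this $s$ into the rate exponent. The natural choice, dictated by matching the target exponent $-\tau/d + (1-\beta)/2$ against $-s/d - \beta/2$, is $s \coloneqq \tau - d/2$, which is strictly positive thanks to the assumption $\tau > d/2$.

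First, decompose $s = m + \alpha$ with $m \coloneqq \lfloor s \rfloor$ and $\alpha \coloneqq s - m \in (0,1]$ (using the convention $m \mapsto m-1$, $\alpha \mapsto 1$ when $s$ is an integer, if needed to remain in the Hölder setting). Since $\Omega$ has a Lipschitz boundary and $\tau > d/2$, the Sobolev embedding theorem provides a continuous inclusion $W_2^\tau(\Omega) \hookrightarrow C^{m,\alpha}(\Omega)$. Composing this with the hypothesised embedding $\ns \hookrightarrow W_2^\tau(\Omega)$ yields $\ns \hookrightarrow C^{m,\alpha}(\Omega)$ with a uniform embedding constant.

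Second, as spelled out in the paragraph immediately preceding the statement, $C^{m,\alpha}(\Omega) \hookrightarrow \Lip(s,L_\infty(\Omega))$ because the Hölder norm controls $\seminorm{\Lip(s,L_\infty(\Omega))}{\cdot}$ by definition. Chaining the two embeddings produces a constant $c_e > 0$ such that inequality \eqref{eq:lip_embedding} holds with $s = \tau - d/2$, so that the hypothesis of Corollary~\ref{thm:new_conv_rates_explicit} is satisfied.

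Finally, apply Corollary~\ref{thm:new_conv_rates_explicit} with this value of $s$. The exponent appearing there becomes
\[
-\frac{s}{d} - \frac{\beta}{2} = -\frac{\tau-d/2}{d} - \frac{\beta}{2} = -\frac{\tau}{d} + \frac{1}{2} - \frac{\beta}{2} = -\frac{\tau}{d} + \frac{1-\beta}{2},
\]
which, after raising to the power $1/\max(1,\beta)$, gives precisely the claimed bound with the same constant $c$. The main (very mild) subtlety is keeping the boundary smoothness hypothesis consistent: Proposition~\ref{prop:entropy_as_lip} is stated under an essentially $C^\infty$ assumption on $\partial\Omega$, while the present statement only asks for a Lipschitz boundary to invoke Sobolev embedding. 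Since the corollary inherits the assumptions of Proposition~\ref{prop:entropy_as_lip}, the additional smoothness of $\partial\Omega$ is tacitly present and no further work is required.
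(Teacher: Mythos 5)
Your proof is correct and follows essentially the same route as the paper's (implicit) argument in the paragraph immediately preceding the corollary: set $s = \tau - d/2$, use the Sobolev embedding $W_2^\tau(\Omega)\hookrightarrow C^{m,\alpha}(\Omega)$ chained with $\ns\hookrightarrow W_2^\tau(\Omega)$ and $C^{m,\alpha}(\Omega)\hookrightarrow\Lip(s,L_\infty(\Omega))$, then substitute into Corollary~\ref{thm:new_conv_rates_explicit}. Your handling of the integer-$s$ case (taking $\alpha=1$, $m\mapsto m-1$ so that $\alpha\in(0,1]$ as required by the paper's definition of $\Lip$) and your observation about the mismatch between the Lipschitz-boundary hypothesis and the $C^\infty$-boundary requirement in Proposition~\ref{prop:entropy_as_lip} are both sound; the latter is indeed absorbed by the phrase ``under the assumptions of \dots Proposition~\ref{prop:entropy_as_lip}''.
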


This corollary removes the $\log(n)$ term that was present in Corollary 11 and Corollary 12 in \cite{Wenzel2022c}, and that was conjectured to be an 
artifact due to the proof strategy, rather than an actual necessity (see the discussion in Section 5.2 in \cite{Wenzel2022c}). In more details, Corollary 12 in 
\cite{Wenzel2022c} states that, under the same assumptions as 
Corollary~\ref{thm:new_conv_rates_explicit_sobolev}, for $\Omega$ compact and $\gamma=1$, there is $c'>0$ such that
\begin{equation}\label{eq:bound_with_log}
\min_{n+1\leq i\leq 2n}\norm{L_\infty(\Omega)}{f - I_{X_i}f}
\leq c' \left(n^{-\frac{\tau}{d}-\frac{1-\beta}{2}}\right)^{\frac{1}{\max\{1, \beta\}}} \log(n)^{\frac{\tau/d-1/2}{\max\{1, \beta\}}} 
\norm{\ns}{f-I_{X_{n+1}}f}.
\end{equation}
The logarithmic term is now removed by Corollary~\ref{thm:new_conv_rates_explicit_sobolev}.

We discuss some optimality aspects of this result in Section~\ref{sec:discussion}, and conclude this section with three remarks.
\begin{remark} 
The bounds of Corollary~\ref{thm:new_conv_rates_explicit} and Corollary~\ref{thm:new_conv_rates_explicit_sobolev} can be formulated over indices $1,\dots, m$ 
for all $m\in\N$, up to changing the constant in the bound. 
Indeed, for all $m\in\N$ we have $2 \lfloor m/2\rfloor\leq m$, so 
\begin{equation}\label{eq:reindex}
\min\limits_{1\leq i \leq m} \norm{L_\infty(\Omega)}{r_i}
\leq \min\limits_{\lfloor m/2\rfloor\leq i \leq 2 \lfloor m/2\rfloor} \norm{L_\infty(\Omega)}{r_i},\; m\in\N.
\end{equation}
Moreover, setting $n\coloneqq \lfloor m/2\rfloor$ we have $n\geq m/4$ for $m\in\N$, so that Corollary~\ref{thm:new_conv_rates_explicit} gives
\begin{equation*}
\min\limits_{n+1\leq i \leq 2 \lfloor m/2\rfloor} \norm{L_\infty(\Omega)}{f - I_{X_i} f}
\leq c\ 
\gamma^{-1}\left(4^{\frac{s}{d}+\frac{\beta}{2}}\right)^{\frac{1}{\max(1,\beta)}}\left(m^{-\frac{s}{d}-\frac{\beta}{2}}\right)^{\frac{1}{\max(1,\beta)}
}\norm{\ns}{f-I_{X_{\lfloor m/2\rfloor+1}}f},
\end{equation*}
and in particular using~\eqref{eq:reindex} we have
\begin{equation}\label{eq:bound_without_two}
\min\limits_{1\leq i \leq m} \norm{L_\infty(\Omega)}{f - I_{X_i} f}
\leq c''\ 
\gamma^{-1}\left(m^{-\frac{s}{d}-\frac{\beta}{2}}\right)^{\frac{1}{\max(1,\beta)} }\norm{\ns}{f},
\end{equation}
which could be easier to handle in some cases.
The same applies to Corollary~\ref{thm:new_conv_rates_explicit_sobolev}.
\end{remark}
\begin{remark}
The error estimate of Corollary~\ref{thm:new_conv_rates_explicit} and Corollary~\ref{thm:new_conv_rates_explicit_sobolev} differ from those of 
\cite{Santin2017x,Wenzel2021a,Wenzel2022c} in another important aspect.
Namely, in the papers \cite{Santin2017x,Wenzel2021a,Wenzel2022c} the existence of a worst case error bound like
\begin{equation}\label{eq:rem_tmp}
\norm{L_\infty(\Omega)}{f - I_{X_n} f}\leq C n^{-\delta} \norm{\ns}{f}\;\;\fa f\in \ns,
\end{equation}
for some $\delta>0$, $C>0$, is used to deduce a convergence rate of $\beta$-greedy.
In this paper, instead, the convergence rate is based on a smoothness assumption on the kernel.

This difference leaves an interesting question open. Indeed, the same smoothness assumptions that we are requiring on the kernel are known to prove a worst case 
error~\eqref{eq:rem_tmp} for suitable $\delta, C>0$ (see e.g. Theorem 11.11 in \cite{Wendland2005} for the case $\ns\hookrightarrow C^{m,
\alpha}(\Omega)$).
What is unknown is if the existence of a worst case error bound like~\eqref{eq:rem_tmp} implies a decay $\varepsilon_n(\Phi(\Omega), \ns)\leq c 
n^{-\delta-1/2}$. This would imply, following the argumentation of the present paper, that a worst case error estimate proves the convergence of the 
$\beta$-greedy algorithm with the correct rate.
\end{remark}

\begin{remark}
In the case of native spaces where the worst-case approximation has an exponential decay rate, i.e.,
there exists a sequence $\{X_n\}_{n\in\N}\subset \Omega$ such that for some $\delta>0$ it holds
\begin{equation*}
\norm{L_\infty(\Omega)}{f - \Pi_{X_n} f} \leq C e^{-c n^\delta} \norm{\ns}{f}\;\;\fa f \in \ns,
\end{equation*}
Corollary~\ref{thm:new_conv_rates_explicit} gives no new information over existing estimates. Indeed, in this case there was no spurious 
logarithmic term in the original result (Corollary 11 in \cite{Wenzel2022c}).
However, for kernels embedded in Sobolev spaces of arbitrary order (i.e., those whose Fourier transform in \eqref{eq:sobolev_embedding} decays faster than any 
polynomial), it could be possible to apply Proposition~\ref{prop:entropy_as_lip} with an arbitrary large $s>0$, up to estimating the growth of the constants as 
a 
function of $s$, as it is typically done for these kernels (see e.g. Section 11.4 in \cite{Wendland2005}, or the appendix in \cite{Rieger2008b}). 
\end{remark}

\subsection{Weakened convergence for weaker selections}
In certain situations it is of interest to consider a weaker selection rule, where one allows the quality of the selected points to degrade with the iteration, 
instead of being bounded to be a factor $\gamma$ away from the optimal selection (if this exists).
In this respect, we notice that all the estimates that lead to Theorem~\ref{thm:new_conv_rates} and its corollaries are valid for any finite $n\in\N$, and not 
only in the limit as $n\to\infty$. Indeed, unlike the
theory of 
\cite{Binev2011}, all the result used here do not need to assume the convergence of the various sequences involved in the estimates, and the result for 
quantities indexed by $n$ follow solely from estimates of (possible other) quantities with index $1\leq i\leq n$. 
In particular, it is possible to consider a sequence $\{\gamma_n\}_{n\in\N}\subset(0,1]$ of parameters, and define
\begin{equation}\label{eq:weak_beta_greedy_sequence}
x_{n+1}\in\left\{x\in\Omega :  \eta_\beta(f,X_n)(x)\geq \gamma_n\sup\limits_{z\in\Omega} \eta_\beta(f, X_n)(z)\right\}.
\end{equation}
This leads to the following modification of Theorem~\ref{thm:new_conv_rates}.
\begin{corollary}
Let $\Omega\subset\R^d$ be bounded, and assume that $k$ is well defined on $\bar\Omega$ and \eqref{eq:rkhs_embedding} holds.
Let $\varepsilon_{n}\coloneqq \varepsilon_n\left(\conv(\Phi(\Omega)),\ns\right)$. 

Any weak $\beta$-greedy algorithm which selects points according to \eqref{eq:weak_beta_greedy_sequence} with $\beta \in [0,\infty]$ and 
$\{\gamma_n\}_{n\in\N}\subset(0,1]$, applied to a function $f \in \ns$, satisfies  
\begin{equation*}
\min\limits_{n+1\leq i \leq 2n} \norm{L_\infty(\Omega)}{f - I_{X_i} f}
\leq \bar\gamma^{-1}\left(\sqrt{5} n^{\frac{1-\beta}{2}} \varepsilon_{n}\right)^{\frac{1}{\max(1,\beta)}}\norm{\ns}{f-I_{X_{n+1}}f},
\;\;n\in\N,
\end{equation*}
where $\bar\gamma\coloneqq \min_{n+1\leq i\leq 2n}\gamma_i$ or $\bar\gamma\coloneqq \left(\prod_{i=n+1}^{2n}\gamma_i\right)^{1/n}$.
\end{corollary}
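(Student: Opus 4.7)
The plan is to retrace the proof chain Lemma~\ref{lemma:weak_beta_greedy} $\to$ Proposition~\ref{prop:thm8} $\to$ Theorem~\ref{thm:new_conv_rates}, and at each step simply carry along an iteration-dependent $\gamma_i$ in place of the uniform $\gamma$. The essential observation, which makes this possible without extra work, is the one already emphasised right before the statement: none of the ingredients (Lemma~\ref{lemma:product}, Lemma~\ref{lemma:weak_beta_greedy}, the Newton-basis identity \eqref{eq:norm_as_expansion}, Lemma 6 of \cite{Wenzel2022c}) requires convergence of any sequence, and the bound at each finite index $n$ depends only on pointwise estimates at finitely many prior indices. Hence allowing $\gamma$ to vary over iterations does not affect the structural parts of the proof, only the constant that accumulates.

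First I would redo Lemma~\ref{lemma:weak_beta_greedy} in the obvious way: since the definition \eqref{eq:weak_beta_greedy_sequence} is identical to \eqref{eq:weak_beta_greedy} except with $\gamma$ replaced by $\gamma_i$ at iteration $i$, the inequalities \eqref{eq:bound_ri_first} and \eqref{eq:bound_ri_second} hold with $\gamma_i^{-1}$ in place of $\gamma^{-1}$. Next, in the two computations inside the proof of Proposition~\ref{prop:thm8}, the geometric mean over $i = n+1, \dots, 2n$ is applied to a product of bounds, each of which now contributes a factor $\gamma_i^{-1}$. Consequently the single prefactor $\gamma^{-1}$ in \eqref{eq:final_result_1} is replaced by the geometric mean
\begin{equation*}
\left(\prod_{i=n+1}^{2n}\gamma_i^{-1}\right)^{1/n} = \left(\prod_{i=n+1}^{2n}\gamma_i\right)^{-1/n}.
\end{equation*}
Feeding the updated Proposition~\ref{prop:thm8} into the proof of Theorem~\ref{thm:new_conv_rates}, which otherwise proceeds unchanged (it only uses Lemma~\ref{lemma:product} on the power-function side, untouched by the weakening), one obtains the stated inequality with $\bar\gamma \coloneqq \left(\prod_{i=n+1}^{2n}\gamma_i\right)^{1/n}$.

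The alternative form with $\bar\gamma \coloneqq \min_{n+1\leq i\leq 2n}\gamma_i$ then follows from the trivial bound
\begin{equation*}
\left(\prod_{i=n+1}^{2n}\gamma_i\right)^{-1/n} \leq \max_{n+1\leq i\leq 2n}\gamma_i^{-1} = \left(\min_{n+1\leq i\leq 2n}\gamma_i\right)^{-1},
\end{equation*}
which is sharper when the $\gamma_i$ are roughly constant but looser when most $\gamma_i$ are close to $1$ and only a few degrade. Both are valid upper bounds, which is why the statement offers the reader a choice between them.

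I do not expect a serious obstacle: the only thing to check carefully is that the factor $\gamma_i^{-1}$ from Lemma~\ref{lemma:weak_beta_greedy} is indeed the \emph{only} place the selection parameter enters (for instance, Lemma 6 of \cite{Wenzel2022c} and Lemma~\ref{lemma:product} concern the Newton basis and entropy numbers, which are independent of how the points were chosen), and that the $\norm{\ns}{r_i} \leq \norm{\ns}{r_{n+1}}$ monotonicity used in Proposition~\ref{prop:thm8} still holds — which it does, since it is a consequence of $X_{n+1} \subset X_i$ for $i \geq n+1$ and the orthogonal projection interpretation of $I_{X_i}f$, independently of the selection rule. Everything else is bookkeeping of a scalar through an unchanged argument.
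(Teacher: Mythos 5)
Your proposal is correct and is exactly the argument the paper intends: its ``proof'' is the one-line remark that the statement follows by tracking the $\gamma_i$ through Proposition~\ref{prop:thm8}, and carrying $\gamma_i^{-1}$ from Lemma~\ref{lemma:weak_beta_greedy} into the geometric mean there and then into Theorem~\ref{thm:new_conv_rates} yields precisely $\bar\gamma^{-1}=\left(\prod_{i=n+1}^{2n}\gamma_i\right)^{-1/n}$, with the $\min$-form following from your trivial bound. Only a cosmetic quibble: the geometric-mean choice of $\bar\gamma$ is \emph{always} at least as sharp as the $\min$ choice (they coincide when the $\gamma_i$ are constant), so your closing comparison of the two is slightly off, but this does not affect the validity of the proof.
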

The statement follows by tracking the $\gamma_i$ quantities within Proposition \ref{prop:thm8} more closely.

This implies for example that one could use~\eqref{eq:weak_beta_greedy_sequence} with $\gamma_i>c' i^{-p}$ for some $c', p>0$ (as done e.g. 
in~\cite{Santin2023x}), and
the bound of Corollary~\ref{thm:new_conv_rates_explicit_sobolev} would become
\begin{equation*}
\min\limits_{n+1\leq i \leq 2 n} \norm{L_\infty(\Omega)}{f - I_{X_i} f}
\leq c\cdot c'\cdot n^{-\frac{2\tau + d(1-\beta)}{2d\max(1,\beta)}+p}\norm{\ns}{f-I_{X_{n+1}}f}, \;\;n\in\N.
\end{equation*}
This means that the convergence rate is slower by an algebraic rate of $p$ compared to the case of a constant $\gamma$.

\section{Discussion of the results and of their optimality}\label{sec:discussion}
We discuss here the results obtained in the previous section. In particular, we first prove that they are optimal in certain respects 
(Section~\ref{sec:opt}), and analyze their relation with optimal nonlinear approximation (Section~\ref{sec:non_lin}). Both these sections leave some questions 
open, and these are partially addressed by an explicit example in Section~\ref{sec:bb_example}.
We remark that in this section the constants $c,c'>0$ are different from one estimate to the other.

In this section we will make use of known lower bounds that can be obtained for kernels whose native space is norm equivalent to $W_2^\tau(\Omega)$, i.e., those 
which satisfy the two inequalities~\eqref{eq:sobolev_embedding}.
In this case, it can be proven that the power function cannot decay too fast. In more detail (see e.g. Theorem
12 in~\cite{Wenzel2021a}) there is $c>0$ such that for any arbitrary point distribution it holds
\begin{equation}\label{eq:lower_bound_pf}
c n^{-\frac{\tau}{d}+\frac{1}{2}} \leq \norm{L_\infty(\Omega)}{P_{X_{n}}},\;\;n\in\N.
\end{equation}
This rate is matched by sequences $\{X_n\}_{n\in\N}\subset\Omega$ of quasi-uniform points, i.e., setting
\begin{equation*}
h_X\coloneqq \sup\limits_{z\in\Omega}\min\limits_{x\in X} \|x - z\|_2,\;\; 
q_X\coloneqq \min\limits_{x\neq z\in X} \|x-z\|_2,
\end{equation*}
there exists $\rho\in\R$ such that $\rho_{X_n}\coloneqq h_{X_n}/q_{X_n}<\rho$ for all $n\in\N$. 
For such sequences, it is additionally known (see 
\cite{Schaback1995}) that there is $c'>0$ such that the smallest eigenvalue of the kernel matrix $A_{k, X_n}\coloneqq \left(k(x_i,x_j)\right)_{1\leq i,j\leq 
n}\in\R^{n\times 
n}$ satisfies
\begin{equation}\label{eq:lower_bound_eig}
c' n^{-2\tau/d+1}\leq \lambda_{\min}(A_{k, X_n}),\;\;n\in\N.
\end{equation}
We will use both lower bounds in the following.

\subsection{Optimality of some components of the estimates}\label{sec:opt}
We analyze here the main building blocks of the estimates, and show that some of them cannot be improved if they have to hold under sufficiently general 
assumptions. 

We start with Lemma~\ref{lemma:product}, and show that it has certain consequences on the decay of the eigenvalues of the kernel matrix, which is lower
bounded. Observe that other aspects of the optimality of Lemma~\ref{lemma:product} have already been discussed in~\cite{Li2023}.
\begin{proposition}
Let $A_{k, X_n}\coloneqq \left(k(x_i,x_j)\right)_{1\leq i,j\leq n}\in\R^{n\times 
n}$ be the kernel matrix associated with a set of points $X_n\subset\Omega$. Then the smallest eigenvalue and the determinant of $A_{k, X_n}$ satisfy
\begin{equation}\label{eq:unc_princ}
\lambda_{\min}(A_{k, X_n}) \leq \det(A_{k, X_n})^{\frac1n} \leq 5\ n\varepsilon_n^2\left(\conv(\Phi(\Omega)),\ns\right).
\end{equation}
Under the assumptions of Proposition~\ref{prop:entropy_as_lip}, if additionally $\ns$ is norm equivalent to $W_2^\tau(\Omega)$,
and if $\{X_n\}_{n\in\N}\subset\Omega$ is a sequence of quasi-uniform points, then there are constants $c, c'>0$ such that 
\begin{equation}\label{eq:asympt_eigen_sobolev}
c' n^{-2\tau/d+1}\leq \lambda_{\min}(A_{k, X_n})\leq \det(A_{k, X_n})^{\frac1n} \leq c n^{-2\tau/d+1}.
\end{equation}
Especially, the rate of Lemma~\ref{lemma:product} cannot be improved.
\end{proposition}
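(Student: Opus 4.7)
The plan is to break the statement into three parts: the general inequality \eqref{eq:unc_princ}, the Sobolev-specific sandwich \eqref{eq:asympt_eigen_sobolev}, and the optimality corollary, and to handle them in that order.

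For \eqref{eq:unc_princ}, my starting point is the classical determinantal identity
\[
\det(A_{k,X_n}) \;=\; \prod_{i=1}^{n} P_{X_{i-1}}(x_i)^2,
\]
which is exactly the Cholesky/Newton-basis factorization of $A_{k,X_n}$: the Newton basis orthonormalizes $\{k(\cdot,x_i)\}$ in $\ns$ with triangular change-of-basis whose diagonal entries are the $P_{X_{i-1}}(x_i)$, and this is the same decomposition already exploited in \eqref{eq:norm_as_expansion}. The left inequality $\lambda_{\min}(A_{k,X_n})\leq \det(A_{k,X_n})^{1/n}$ is then immediate, since the minimum of positive numbers is at most their geometric mean. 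For the right inequality I would set $v_i\coloneqq k(\cdot,x_i)\in K\coloneqq\Phi(\Omega)$ and apply Lemma~\ref{lemma:product} with $m=1$; by \eqref{eq:power_as_proj} the factors $\norm{\ns}{v_i - \Pi_{V_{i-1}} v_i}$ coincide with $P_{X_{i-1}}(x_i)$, and plugging in the bound $(n!S_n)^{1/n}\leq \sqrt{5}\,n^{1/2}$ from Remark~\ref{rem:constant} and squaring yields $\det(A_{k,X_n})^{1/n}\leq 5n\,\varepsilon_n^2$.

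For \eqref{eq:asympt_eigen_sobolev}, the lower bound on $\lambda_{\min}$ is simply the quoted \eqref{eq:lower_bound_eig}, which is precisely the estimate that holds for quasi-uniform point sets in the Sobolev setting. For the matching upper bound on $\det(A_{k,X_n})^{1/n}$ I would combine \eqref{eq:unc_princ} with the decay rate of $\varepsilon_n$ from Proposition~\ref{prop:entropy_as_lip}: the norm equivalence $\ns\asymp W_2^\tau(\Omega)$ together with the Sobolev embedding theorem (the very chain $W_2^\tau\hookrightarrow C^{m,\alpha}\hookrightarrow\Lip(s,L_\infty(\Omega))$ discussed after Corollary~\ref{thm:new_conv_rates_explicit}) gives $\ns\hookrightarrow\Lip(s,L_\infty(\Omega))$ with $s=\tau-d/2$, so Proposition~\ref{prop:entropy_as_lip} yields $\varepsilon_n\leq C n^{-s/d-1/2} = C n^{-\tau/d}$. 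Substituting this into \eqref{eq:unc_princ} closes the sandwich with $c=5C^2$.

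The optimality assertion is then a short contradiction argument built on top of the sandwich just obtained. If the factor $(n!S_n)^{1/n}\sim n^{1/2}$ in Lemma~\ref{lemma:product} could be replaced by $n^{1/2-\delta}$ for some $\delta>0$, retracing the derivation of \eqref{eq:unc_princ} would give $\det(A_{k,X_n})^{1/n}\leq c\,n^{1-2\delta}\varepsilon_n^2$, and specializing as in \eqref{eq:asympt_eigen_sobolev} would force $\lambda_{\min}(A_{k,X_n})\leq c\,n^{1-2\delta-2\tau/d}$, contradicting \eqref{eq:lower_bound_eig} for $n$ sufficiently large. The main technical subtlety is really just invoking the Newton-basis/Cholesky identity for $\det(A_{k,X_n})$ cleanly; once that is in place, the rest is a direct concatenation of Lemma~\ref{lemma:product}, Proposition~\ref{prop:entropy_as_lip}, and the quoted lower bounds \eqref{eq:lower_bound_pf}--\eqref{eq:lower_bound_eig}, so I do not anticipate a serious obstacle.
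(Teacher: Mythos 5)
Your proposal is correct and follows essentially the same route as the paper: the Cholesky/Newton-basis identity $\det(A_{k,X_n})=\prod_{i=1}^n P_{X_{i-1}}(x_i)^2$, Lemma~\ref{lemma:product} with $m=1$ together with Remark~\ref{rem:constant}, the entropy bound from Proposition~\ref{prop:entropy_as_lip} with $s=\tau-d/2$, and the contradiction against \eqref{eq:lower_bound_eig}. The only cosmetic difference is that you spell out the elementary step $\lambda_{\min}(A)\leq\det(A)^{1/n}$ (minimum bounded by geometric mean of the eigenvalues), which the paper leaves implicit.
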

\begin{proof}
Let $A_{k, X_n}= L L^T$ be a Cholesky decomposition, which exists since the matrix is positive definite.
We define $v_i\coloneqq k(\cdot, x_i)\in\ns$ for an arbitrary enumeration of $X_n$, and set $V_i\coloneqq \Sp\{v_j, 1\leq j\leq i\}$ as in 
Lemma~\ref{lemma:product}. Then $u_i\coloneqq v_i - \Pi_{V_{i-1}} v_i$ are the vectors obtained from the Gram-Schmidt orthogonalization (i.e., no
normalization) of the vectors $v_i$. These are the non-normalized Newton basis of $V(X_n)$ (see~\cite{Muller2009,Pazouki2011}), whose norms satisfy
$\norm{\ns}{u_i}=L_{ii}$, i.e., they are the diagonal elements of the Cholesky factor (Remark 8 in \cite{Muller2009}). It follows that
\begin{equation*}
\prod_{i=1}^n \norm{\ns}{v_i-\Pi_{V_{i-1}} v_i} = \prod_{i=1}^n L_{ii} = \det(L) = \sqrt{\det(A_{k, X_n})},
\end{equation*}
and thus applying Lemma~\ref{lemma:product} with $m=1$ gives \eqref{eq:unc_princ}.

We can now use the same argument as in  Corollary~\ref{thm:new_conv_rates_explicit_sobolev} to upper bound $\varepsilon_n\left(\conv(\Phi(\Omega)),\ns\right)$ 
for kernels whose native space is embedded in $W_2^\tau(\Omega)$, and inserting this estimate into~\eqref{eq:unc_princ} gives the upper bound in 
\eqref{eq:asympt_eigen_sobolev}. The lower bound in~\eqref{eq:asympt_eigen_sobolev} is instead just given by~\eqref{eq:lower_bound_eig}, which holds under the 
assumptions that the points are quasi-uniform.

Now obviously, if the rate in Lemma~\ref{lemma:product} could be improved for this case, then we would obtain contradicting upper and lower bounds 
in~\eqref{eq:asympt_eigen_sobolev}.
\end{proof}

\begin{remark}
The bound~\eqref{eq:unc_princ} can be understood as a sort of nonlinear form of uncertainty relation or trade-off principle (see \cite{Schaback1995, 
Schaback2023}). Indeed, it proves that good approximability of $\conv(\Phi(\Omega))$, by means of a nonlinear process, necessarily causes the kernel matrix to
have a small determinant and a small minimal eigenvalue.
Moreover, \eqref{eq:asympt_eigen_sobolev} provides an exact asymptotic of the determinant of the kernel matrix for certain kernels and point distributions, in 
a form that we are not aware has been proven in the literature. This is analogous to results used in polynomial interpolation to analyze some optimality 
property of Fekete points (see e.g.~\cite{Bloom1992,Bloom2012}), which have been considered recently also for kernel interpolation~\cite{Karvonen2020b}.
\end{remark}

We next turn to look at the optimality of our main result, and prove the following proposition using the lower bound on the power function.
\begin{proposition}
Under the hypotheses of Corollary~\ref{thm:new_conv_rates_explicit_sobolev}, assume additionally that $\ns$ is norm equivalent to $W_2^\tau(\Omega)$ 
for $\tau>d/2$.
Then Corollary~\ref{thm:new_conv_rates_explicit_sobolev} with $\beta=0$, $\gamma=1$, implies that there is $c'>0$ such that
\begin{equation*}
\norm{L_\infty(\Omega)}{P_{X_{2n}}}
\leq c' (2n)^{-\frac{\tau}{d}+\frac{1}{2}}, \;\;n\in\N.
\end{equation*}
In view of~\eqref{eq:lower_bound_pf}, it is not possible to improve the rates of Proposition~\ref{prop:entropy_as_lip}, and the rates of 
Corollary~\ref{thm:new_conv_rates_explicit_sobolev} and Theorem~\ref{thm:new_conv_rates} for $\beta=0$.
\end{proposition}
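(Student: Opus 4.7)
The plan is to apply Corollary~\ref{thm:new_conv_rates_explicit_sobolev} with $\beta=0$ and $\gamma=1$ to a carefully chosen worst-case target function that realizes the $L_\infty$-norm of the power function. The key observation is that for $\beta=0$ the selection rule $\eta_0=P_{X_n}$ is independent of the target function $f$, so the same greedy point sequence $X_n$ arises for every choice of $f$, and in particular it can be used together with a function tailored to $P_{X_{2n}}$. Since $\gamma=1$ is admissible precisely when $\Omega$ is closed, I would use this compactness to ensure that the supremum defining $\norm{L_\infty(\Omega)}{P_{X_{2n}}}$ is attained.

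Concretely, I would pick $x^*\in\Omega$ with $P_{X_{2n}}(x^*)=\norm{L_\infty(\Omega)}{P_{X_{2n}}}$ (available by continuity of $P_{X_{2n}}$ on the compact set $\Omega$) and define the extremal function
\[
f^*\coloneqq k(\cdot,x^*)-\Pi_{V(X_{2n})}k(\cdot,x^*)\in\ns.
\]
Two elementary computations then set up the transfer: from \eqref{eq:power_as_proj} we have $\norm{\ns}{f^*}=P_{X_{2n}}(x^*)$, and the reproducing property together with $f^*\perp V(X_{2n})$ in $\ns$ gives
\[
f^*(x^*)=\inner{\ns}{f^*,k(\cdot,x^*)}=\norm{\ns}{f^*}^2=P_{X_{2n}}(x^*)^2.
\]
Since $V(X_i)\subseteq V(X_{2n})$ for $i\leq 2n$ and $f^*\perp V(X_{2n})$, the orthogonal projection that defines the interpolant satisfies $I_{X_i}f^*=\Pi_{V(X_i)}f^*=0$ for every $n+1\leq i\leq 2n$. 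In particular, $\norm{L_\infty(\Omega)}{f^*-I_{X_i}f^*}=\norm{L_\infty(\Omega)}{f^*}$ throughout that range, and $\norm{\ns}{f^*-I_{X_{n+1}}f^*}=\norm{\ns}{f^*}$.

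Feeding $f^*$ into Corollary~\ref{thm:new_conv_rates_explicit_sobolev} with $\beta=0$, $\gamma=1$ then collapses to $\norm{L_\infty(\Omega)}{f^*}\leq c\,n^{-\tau/d+1/2}\norm{\ns}{f^*}$. Combining this with $\norm{L_\infty(\Omega)}{f^*}\geq |f^*(x^*)|=P_{X_{2n}}(x^*)^2$ and $\norm{\ns}{f^*}=P_{X_{2n}}(x^*)$ yields $P_{X_{2n}}(x^*)\leq c\,n^{-\tau/d+1/2}$, which becomes $c'(2n)^{-\tau/d+1/2}$ after absorbing the factor $2^{\tau/d-1/2}$ into the constant. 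I expect no serious obstacle: the degenerate case $P_{X_{2n}}(x^*)=0$ makes the claim trivial, and otherwise one may cancel $P_{X_{2n}}(x^*)$ from both sides. The non-improvability statement then follows by contraposition, since a strictly better exponent either in the entropy decay of Proposition~\ref{prop:entropy_as_lip} or in the $\beta=0$ case of Theorem~\ref{thm:new_conv_rates} and Corollary~\ref{thm:new_conv_rates_explicit_sobolev} would propagate through the same argument to violate the lower bound~\eqref{eq:lower_bound_pf} on $\norm{L_\infty(\Omega)}{P_{X_{2n}}}$.
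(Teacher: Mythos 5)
Your proof is correct and follows essentially the same route as the paper: exploiting the target-independence of the $\beta=0$ selection, applying the corollary to the extremal residual function $k(\cdot,x^\ast)-\Pi_{V(X_{2n})}k(\cdot,x^\ast)$, and using $I_{X_i}f^\ast=0$ for $n+1\le i\le 2n$ to collapse the bound onto $\norm{L_\infty(\Omega)}{P_{X_{2n}}}$. The only cosmetic difference is that the paper takes $x^\ast$ to be the next point $x_{2n+1}$ selected by $P$-greedy with $\gamma=1$ (which is a maximizer of $P_{X_{2n}}$ by definition), whereas you invoke compactness and continuity to pick a maximizer directly; the two choices are interchangeable here.
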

\begin{proof}
For $\beta=0$ ($P$-greedy) the selected points are independent of $f$, and thus Corollary~\ref{thm:new_conv_rates_explicit_sobolev} with $\beta=0$, 
$\gamma=1$, applies to any $f\in\ns$ with a fixed set of points $X_{2n}\subset\Omega$.
Let $x_{2n+1}\in\Omega$ be the next point selected by $P$-greedy, and let
$f\coloneqq k(\cdot, x_{2n+1}) - I_{X_{2n}}(k(\cdot, x_{2n+1}))$, so that
\begin{equation}\label{eq:beta_opt_tmp}
\norm{L_\infty(\Omega)}{P_{X_{2n}}}
=
P_{X_{2n}}(x_{2n+1})
=
\frac{\left|f(x_{2n+1})\right|}{\norm{\ns}{f}}
\leq
\frac{\norm{L_\infty(\Omega)}{f}}{\norm{\ns}{f}}.
\end{equation}
Then for all $n+1\leq i \leq 2n$ we have $f_{|X_i}=0$, and thus $I_{X_i} f=0$ and $f - I_{X_i} f=f$. In particular
\begin{equation*}
\min\limits_{n+1\leq i\leq 2n}\norm{L_\infty(\Omega)}{f - I_{X_i} f} = \norm{L_\infty(\Omega)}{f},\;\; \norm{\ns}{f - I_{X_{n+1}}f} = \norm{\ns}{f},
\end{equation*}
and thus Corollary~\ref{thm:new_conv_rates_explicit_sobolev} for $\beta=0$, $\gamma=1$, and~\eqref{eq:beta_opt_tmp} prove that
\begin{equation*}
\norm{L_\infty(\Omega)}{P_{X_{2n}}}
\leq\frac{\norm{L_\infty(\Omega)}{f}}{\norm{\ns}{f}}
\leq C \sqrt{5}\ n^{-\frac{\tau}{d}+\frac{1}{2}}
\leq c (2n)^{-\frac{\tau}{d}+\frac{1}{2}}, \;\;n\in\N,
\end{equation*}
where $c\coloneqq C \sqrt{5}\ 2^{\frac{\tau}{d}-\frac{1}{2}}$. This matches the lower bound of~\eqref{eq:lower_bound_pf}.
\end{proof}
It remains open to understand if Theorem~\ref{thm:new_conv_rates} can instead be improved for values $\beta\neq0$, e.g., if a better rate can be
obtained for target-data-dependent selection rules.
To address this point, we construct in Section~\ref{sec:bb_example} an explicit example that shows that this is not the case in general. 
However, before moving to this example, we compare the new rates proven for $f$-greedy to those of an optimal uniform nonlinear approximation.

\subsection{Comparison with optimal uniform nonlinear approximation}\label{sec:non_lin}
The problem of how well functions can be approximated in a Reproducing Kernel Hilbert Space is well studied, and there is an interesting gap occurring between 
different approximation schemes (see \cite{Santin2016a,Steinwart2017}).

Namely, \cite{Steinwart2017} considers the following types of approximation quantities, where $B_{\ns}$ is the unit ball in $\ns$. The interpolation $n$-width 
\begin{equation*}
I_n(B_{\ns}, L_\infty(\Omega))
\coloneqq \inf\limits_{X\subset \Omega, |X|\leq n} \norm{L_\infty(\Omega)}{\sup\limits_{f\in B_{\ns}}|(f - I_X f)(x)|}
= \inf\limits_{X\subset \Omega, |X|\leq n} \norm{L_\infty(\Omega)}{P_X},
\end{equation*}
measures how well any $f\in B_{\ns}$ can be approximated by interpolation with at most $n$ points. This is the rate that gives a lower bound for
the approximation via target-data-independent selection rules, and \cite{Santin2017x} proves that this lower bound is matched by $P$-greedy ($\beta=0$).

One may hope to improve this rate by moving to general linear approximation algorithms, and the best approximation error in this case is given by the 
approximation $n$-width. Given the space $L_n(\ns, L_\infty(\Omega))$ of bounded linear operators $A:\ns\to L_\infty(\Omega)$ of rank at most $n$, this is 
defined by 
\begin{equation*}
a_n(B_{\ns}, L_\infty(\Omega))\coloneqq \inf\limits_{A\in L_n(\ns, L_\infty(\Omega))} \sup\limits_{f\in B_{\ns}} \norm{L_\infty(\Omega)}{f - A f}.
\end{equation*}
A further generalization of the 
admissible approximation schemes leads to the consideration of the Kolmogorov $n$-width $d_n(\ns, L_\infty(\Omega))$, which measures how well any
$f\in B_{\ns}$ can be approximated in the $L_\infty$-sense by an arbitrary (nonlinear) approximation scheme with $n$-terms. Observe that this Kolmogorov width 
is different from the Kolmogorov width $d_n(K, \ns)$, $K\subset\ns$ considered before in this paper.

It clearly holds that
\begin{equation*}
d_n(\ns, L_\infty(\Omega))
\leq
a_n(\ns, L_\infty(\Omega))
\leq
I_n(\ns, L_\infty(\Omega)),
\end{equation*}
but one may wonder how tight these inequalities are.

The paper \cite{Steinwart2017} addresses precisely the existence of a gap between these quantities.
In particular, among other more general results \cite{Steinwart2017} proves in Corollary 5.1 that if $\ns$ is norm equivalent to $W_2^\tau(\Omega)$, 
then there are constants $c_1, c_2, c_3, c_4>0$ such that 
\begin{align}\label{eq:is_gap}
c_1 n^{-\tau/d} &\leq d_n(\ns, L_\infty(\Omega))\leq c_2 n^{-\tau/d},\\
c_3 n^{-\tau/d + 1/2} &\leq a_n(\ns, L_\infty(\Omega)) \leq I_n(\ns, L_\infty(\Omega))\leq c_4 n^{-\tau/d + 1/2},\nonumber
\end{align}
i.e., there is a gap of $n^{1/2}$ between best nonlinear uniform approximation and best uniform interpolation, and moving to general linear algorithms is not 
sufficient to close this gap.
In view of the results of Corollary~\ref{thm:new_conv_rates_explicit_sobolev}, we see that one can instead close this gap by using adaptivity (i.e., $f$-greedy, 
$\beta=1$), since indeed the convergence rate of Corollary~\ref{thm:new_conv_rates_explicit_sobolev} matches the one of $d_n(\ns, L_\infty(\Omega)$ 
in~\eqref{eq:is_gap}.

Observe, however, that $d_n(\ns, L_\infty(\Omega))$ still does not capture the best rate achievable by $f$-greedy, and simply modifying its definition to allow
an adaptive selection of the space $V_n$ would of course result in a zero approximation error (just pick $f\in V_n$). 
One may instad consider the width 
\begin{equation}\label{eq:best_adaptive_interp}
\sup\limits_{f\in \ns} \inf\limits_{X\subset \Omega, |X|\leq n} \norm{L_\infty(\Omega)}{f - I_X f},
\end{equation}
of optimal adaptive interpolation. 
We are not aware of result that quantify the decay of this width, and its rate could in principle be significantly better than those of $d_n(\ns, 
L_\infty(\Omega))$ and of $f$-greedy. However, the example presented in the next section demonstrates that this is not the case in general.

\subsection{A limiting case for \texorpdfstring{$\beta=1$}{beta equal 1}}\label{sec:bb_example}
We discuss here an example where the form of the kernel allows us to perform explicit calculations, which show that the rates of
Theorem~\ref{thm:new_conv_rates} cannot be improved if they have to be valid for a general strictly positive definite kernel, and that the rate of 
convergence of $f$-greedy may match those of optimal $L_\infty$ interpolation in the sense of~\eqref{eq:best_adaptive_interp}.

The example pertains $\Omega\coloneqq(0,1)\subset\R$ (i.e., $d=1$) and the Brownian Bridge kernel $k:\Omega\times\Omega\to\R$, $k(x,y)\coloneqq \min(x,y) -x y$.
Its native space is $\ns=H_0^1((0,1))$ (i.e., the Sobolev space $W_2^1(\Omega)$ with zero boundary conditions) with the inner product 
\begin{equation*}
\inner{\ns}{u,v} = \left(\int_0^1u'(x)v'(x) dx\right)^{1/2}, 
\end{equation*}
and this space is embedded in $C^{0,1/2}(\Omega)$. Thus, we expect from Corollary~\eqref{thm:new_conv_rates_explicit} with $s=1/2$ to have rates $n^{-1/2 -1/2} 
=
n^{-1}$ for $\beta=1$. Kernel interpolation in $\ns$ coincides with piecewise linear spline interpolation with zero boundary conditions.

Observe that for this kernel it is known (see Theorem 5 in~\cite{Santin2022b}) that 
\begin{equation*}
\inf_{X_n\subset \Omega, |X_n|\leq n} \norm{L_\infty(\Omega)}{P_{X_n}} = \frac12 (n+1)^{-1/2},
\end{equation*}
where the infimum is realized when $X_n\coloneqq\{{i}/{(n+1)},\ 1\leq i\leq n\}$ are equally spaced points.

For a certain function $f\in\ns$, we can explicitly describe the approximation provided by $f$-greedy, and we can even compare it with the $L_\infty$-optimal 
interpolation, i.e., the one obtained with points\footnote{Observe that the infimum does not need to be attained in general, but we show that an optimal $\bar 
X_n$ exists in this case.}
\begin{equation*}
\bar X_n = \argmin\limits_{X_n\subset \Omega, |X_n| = n} \norm{L_\infty(\Omega)}{f - I_{X_n} f}.
\end{equation*}
Calculations that are reported in Appendix~\ref{sec:bb_computations} show the following. 
\begin{lemma}\label{lemma:bb}
Let $f(x)\coloneqq x (1-x)\in\ns$. 
For every $n\in\N$, the set $\bar X_n\coloneqq\{{i}/{(n+1)},\ 1\leq i\leq n\}$ of $n$ equally spaced points is the unique set of $L_\infty(\Omega)$-optimal 
interpolation points for $f$, and setting $\bar r_n\coloneqq f - I_{\bar X_n} f$ we have
\begin{equation*}
\norm{L_\infty(\Omega)}{\bar r_{n-1}} = n^{-2}/4,\;\;
\norm{\ns}{\bar r_{n-1}} = n^{-1}/\sqrt{3},\;\;n\in\N.
\end{equation*}
The $f$-greedy algorithm applied to $f$ selects instead the points $X_n$ defined by bisecting the largest interval, i.e., 
\begin{equation*}
x_{2^{\ell-1} + i} = (2i + 1) / 2^{\ell},\;\; \ell\in\N, \;\; i=0,\dots, 2^{\ell-1}-1,
\end{equation*}
where the points corresponding to the same index $\ell$ may be chosen in any order.
For these points, setting  $r_n\coloneqq f - I_{X_n} f$ and $n\coloneqq 2^{\ell-1} + i$, we have
\begin{equation}\label{eq:error_f_greedy_bb}
\norm{L_\infty(\Omega)}{r_{n-1}} = 2^{-2\ell},\;\;
\norm{\ns}{r_{n-1}} =\left(\left(2^{1-3\ell}/3\right) \left(2^{\ell+1}  - 3 i\right)\right)^{1/2}, \;\; \ \ell\in\N,\  i=0,\dots,\ 2^{\ell-1}-1.
\end{equation}
In particular it holds
\begin{equation*}
\frac{\sqrt{3}}{4\sqrt{2}} (n+1)^{-1}
= \frac{1}{\sqrt{2}} \frac{\norm{L_\infty((0,1))}{\bar r_n}}{\norm{\ns}{\bar r_n}}
\leq \frac{\norm{L_\infty((0,1))}{r_n}}{\norm{\ns}{r_n}}
\leq 4 \frac{\norm{L_\infty((0,1))}{\bar r_n}}{\norm{\ns}{\bar r_n}}
= \sqrt{3} (n+1)^{-1},\;\; n\in\N.
\end{equation*}
\end{lemma}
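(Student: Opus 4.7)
The plan is to exploit that for the Brownian Bridge kernel, $\ns = H_0^1((0,1))$ with the Dirichlet-energy inner product, and each $k(\cdot, y)$ is the hat function with vertex at $y$ vanishing at $0$ and $1$. Thus $V(X)$ consists precisely of piecewise linear functions with nodes at $X$ and zero boundary values, and $I_X f$ is the standard piecewise linear interpolant of $f$ at $X \cup \{0, 1\}$. Since $f(x) = -x^2 + x$ and $I_X f$ is linear on each subinterval $[x_j, x_{j+1}]$ of the partition (with $x_0 \coloneqq 0$, $x_{|X|+1} \coloneqq 1$), the residual $r = f - I_X f$ is a quadratic vanishing at both endpoints with leading coefficient $-1$, so $r(x) = -(x - x_j)(x - x_{j+1})$ there. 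Setting $h_j \coloneqq x_{j+1} - x_j$, this yields $\norm{L_\infty([x_j, x_{j+1}])}{r} = h_j^2/4$ (attained at the midpoint) and, via $r'(x) = -(2x - x_j - x_{j+1})$, $\int_{x_j}^{x_{j+1}} (r'(x))^2 \, dx = h_j^3/3$, whence
\[
\norm{L_\infty(\Omega)}{r} = \max_j h_j^2/4, \qquad \norm{\ns}{r}^2 = \tfrac{1}{3}\sum_j h_j^3.
\]

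Minimizing $\max_j h_j^2/4$ over partitions with $n-1$ interior points uniquely forces $h_j \equiv 1/n$, giving the equispaced $\bar X_{n-1}$ together with the claimed values of $\norm{L_\infty(\Omega)}{\bar r_{n-1}}$ and $\norm{\ns}{\bar r_{n-1}}$. For $f$-greedy, the selection rule picks the midpoint of any longest subinterval, since $r_n$ is strictly positive on each open subinterval with maximum $h_j^2/4$ at the midpoint; starting from $r_0 = f$, which is uniquely maximized at $1/2$, an induction on $n = 2^{\ell - 1} + i$ shows that after $n - 1$ steps the partition consists of $2i$ subintervals of length $2^{-\ell}$ and $2^{\ell - 1} - i$ of length $2^{1-\ell}$. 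Plugging these counts into the formulas above gives \eqref{eq:error_f_greedy_bb}.

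For the final two-sided inequality I apply the same counting with $j \coloneqq i + 1$ (to describe the partition after $n$ points) in place of $i$, and compute $\norm{L_\infty(\Omega)}{r_n}^2/\norm{\ns}{r_n}^2$ as an explicit rational function of $\ell$ and $j$. One must distinguish the \emph{transition} case $j = 2^{\ell-1}$, where the partition has been completely refined to equal intervals of length $2^{-\ell}$, from the generic case $j < 2^{\ell-1}$; in both I verify
\[
\frac{3}{16} \le (n+1)^2 \frac{\norm{L_\infty(\Omega)}{r_n}^2}{\norm{\ns}{r_n}^2} \le 3.
\]
In the generic case the upper bound reduces to the quadratic inequality $j^2 + 7 \cdot 2^\ell j - 15 \cdot 2^{2\ell-2} \le 0$, whose only positive root is $j = 2^{\ell-1}$, while the lower bound follows from the elementary estimates $(2^{\ell-1}+j)^2 \ge 2^{2\ell-2}$ and $2^{\ell+1}-3j \le 2^{\ell+1}$; the transition case evaluates to exactly $3/16$. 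Combining these with $\norm{L_\infty(\Omega)}{\bar r_n}/\norm{\ns}{\bar r_n} = \sqrt{3}/(4(n+1))$, and noting that $\sqrt{3/16} > \sqrt{3/32} = \sqrt{3}/(4\sqrt{2})$, delivers the claimed bound. The only subtle point is organizing the counting and the transition cleanly; the rest is elementary algebra.
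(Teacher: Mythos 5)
Your proof is correct and follows the same overall strategy as the paper: identify kernel interpolation for the Brownian Bridge kernel with piecewise-linear spline interpolation with zero boundary values, derive the per-interval residual $-(x-a)(x-b)$ and the resulting formulas $\norm{L_\infty(\Omega)}{r} = \max_j h_j^2/4$ and $\norm{\ns}{r}^2 = \frac13\sum_j h_j^3$, identify the $f$-greedy selection as midpoint bisection of the longest interval, and then compare the resulting ratio to the equispaced one. You diverge from the paper's proof in two places, both at the level of how the algebra is organized rather than the underlying ideas. For the $\ns$-norm of $r_{n-1}$, the paper uses the Pythagorean telescoping identity with the increments $c_j^2 = \norm{\ns}{I_{X_j}f}^2 - \norm{\ns}{I_{X_{j-1}}f}^2$, summing the tail $\sum_{j\ge n} c_j^2$; you instead substitute the interval-length multiset ($2i$ intervals of length $2^{-\ell}$ and $2^{\ell-1}-i$ of length $2^{1-\ell}$) directly into $\frac13\sum_j h_j^3$, which is more self-contained and immediately yields $\frac{2^{1-3\ell}}{3}(2^{\ell+1}-3i)$. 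For the final two-sided bound, the paper separately shows $\norm{L_\infty(\Omega)}{\bar r_n}\le\norm{L_\infty(\Omega)}{r_n}\le 4\norm{L_\infty(\Omega)}{\bar r_n}$ and, via a short calculus argument on the ratio $R(i,\ell)$, that $\norm{\ns}{\bar r_n}^2\le\norm{\ns}{r_n}^2\le 2\norm{\ns}{\bar r_n}^2$, then multiplies; you instead bound the single quantity $(n+1)^2\norm{L_\infty(\Omega)}{r_n}^2/\norm{\ns}{r_n}^2$ directly, collapsing the calculus step into one explicit quadratic inequality whose positive root $j=2^{\ell-1}$ is exactly the level transition. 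This buys a cleaner argument and even a slightly sharper lower bound ($3/16$ in place of $3/32$), and makes the sharpness of the upper constant $\sqrt{3}$ (approached as $j\to 2^{\ell-1}$) visible at a glance. The only place where care is needed in your writeup is the induction establishing the interval-count formula after $n-1$ steps, which you state but do not carry out; it is routine but deserves a sentence.
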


The lemma shows that there is a kernel $k$ and a function $f\in\ns$ for which the rates of Theorem~\ref{thm:new_conv_rates} cannot be improved asymptotically 
even for
$\beta=1$, and consequently the same holds for its corollaries. Moreover, this lemma even shows that $f$-greedy points provide the same rate of 
$L_\infty$-approximation as the $L_\infty$-optimal interpolation points.
Observe that in this case we prove that the rate $s/d-1/2$ cannot be improved. However, this rate happens to coincide with $2s/d$, and one may argue that a
better rate could be obtained for smoother kernels. We show numerically in Section~\ref{sec:numerics} that this seems not to be the case.

We report in Figure~\ref{fig:bb_example} the rates obtained by numerically evaluating the errors described in Lemma~\ref{lemma:bb}, where experiments are
run up to $n=127=2^7-1$, running the $f$-greedy selection on $5001$ equally spaced points in $\Omega$, and evaluating the $L_\infty$ errors by point evaluation 
on $10001$ 
equally spaced points in $\Omega$.

Regarding the relation between $f$-greedy and optimal points, it is interesting to observe that the first are nested, and thus they are optimized by bisecting 
the largest interval, while the optimal ones are equally spaced. This implies in particular that they coincide for $n=2^\ell$. 
For the intermediate values of $n$, however, the $L_\infty$ error of the $f$-greedy points does not decrease and the $\calh$-error is instead 
monotonically decreasing (equation~\eqref{eq:error_f_greedy_bb}). This implies that there are arbitrarily long sequences of consecutive points with an increase of the ratio between the two errors.

\begin{figure}[ht]
\centering
\begin{tabular}{cc}
\includegraphics[width=0.5\textwidth]{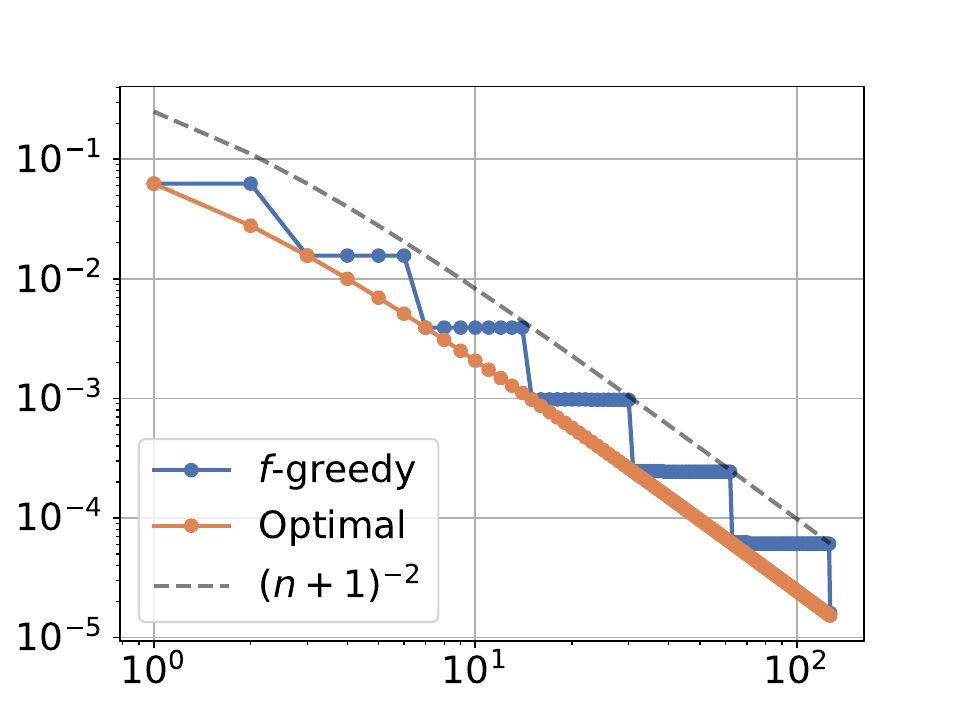}
&
\includegraphics[width=0.5\textwidth]{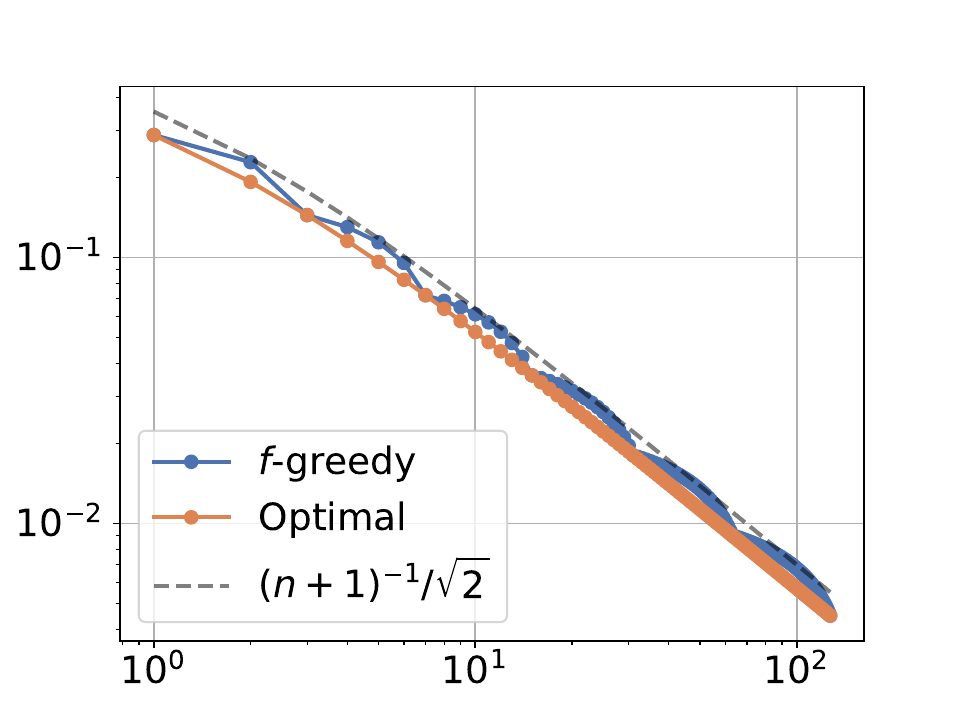}
\end{tabular}
\includegraphics[width=0.5\textwidth]{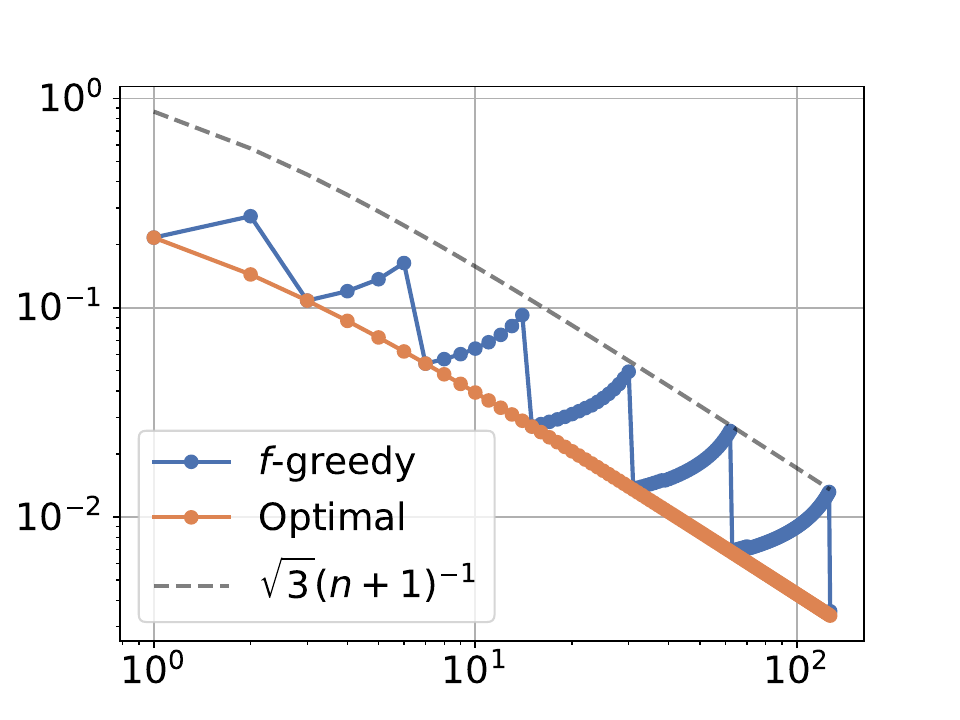}
\caption{
Decay of the error ($y$-axis) for $f\coloneqq x(1-x)$ as a function of the number of interpolation points ($x$-axis), interpolated with the Brownian Bridge kernel in $\Omega=(0,1)$ by $f$-greedy and optimal points. The plots show the
decay of the $L_\infty$-error (left), the $\ns$-error (right), and their ratio (bottom), together with their algebraic rates.
The $x$- and $y$-scales of the plot are logarithmic.}\label{fig:bb_example}
\end{figure}

Moreover, this example shows that it is not the $\ns$-norm convergence in itself to provide a faster decay in 
Corollary~\ref{thm:new_conv_rates_explicit_sobolev}, since in this example we have $\norm{\ns}{r_n}\leq c n^{-1}$, while for general functions one can only 
expect convergence of this quantity, but with an arbitrarily slow rate (see e.g. Chapter 8 in  \cite{Iske2019}). It seems instead plausible that the issue is 
rather related to adaptivity, i.e., one may get a rate faster than $n^{-1}$ when the points selected by $f$-greedy deviate significantly from the uniformly 
optimal ones (which are equally spaced for this kernel, see \cite{Santin2022b}).
We will test this fact further in the two examples in 
Section~\ref{sec:numerics}.

\section{Numerical experiments}\label{sec:numerics}
An extensive numerical investigation of $\beta$-greedy has already been discussed in \cite{Wenzel2022c}, to which we refer for further details. 

Here, we report the results of two simple experiments that target some novel aspects discussed in this paper. 
Both experiments are obtained on $\Omega\coloneqq(0,1)$ with the Brownian Bridge kernel, that is denoted in this section as $k_1$ with its native space 
$\calh_1$, and its iterated version
\begin{equation*}
k_2(x, y)\coloneqq \int_0^1 k_1(x, z) k_1(z, y)dz = -\frac{1}{6} \left(\min(x, y) - x y\right) \left(x^2 + y^2 - 2 \max(x, y)\right),
\end{equation*}
whose native space $\calh_2$ is $H_0^2(\Omega)$ (i.e., the Sobolev space $W_2^2(\Omega)$ with zero boundary conditions of second order) with inner product 
$\inner{\calh_2}{u, v} \coloneqq \int_{0}^1 u''(x) v''(x) dx$ (see \cite{Fasshauer2015}, Chapter 7). For this space we expect a rate $s/d=-3/2$ in 
Corollary~\ref{thm:new_conv_rates_explicit_sobolev}.

We consider the target function $f_p(x)\coloneqq 2^{2p} (x (1-x))^p$. It differs from the function used in Section~\ref{sec:bb_example} in the exponent $p$ and in the normalization factor, which is chosen so that $f_p(1/2) = \norm{L_\infty(\Omega)}{f_p}=1$ to simplify the comparison between different values of $p$.
The function is in the native space of $k_1$ if $p>1/2$ and of $k_2$ if $p>3/2$. In these cases, one can 
explicitly compute
\begin{equation*}
\norm{\calh_1}{f_p}^2 = \frac{2  p \sqrt\pi}{2 p - 1}\cdot \frac{\Gamma(2 p+1)}{\Gamma(2  p + 1/2)},\;\;
\norm{\calh_2}{f_p}^2 = \frac{48 (p-1) p^2 \sqrt{\pi}}{2 p-3}\cdot \frac{\Gamma(2 p-1)}{\Gamma(2 p-1/2)}.
\end{equation*}
In both experiments, this function is interpolated with $f$-greedy, which is run with the same discretization of $\Omega$ described in 
Section~\ref{sec:bb_example}, except that the iteration is stopped before $n=127$ when the selected points are too close together.

We first use $k_1$ to interpolate the functions $f_{0.51}$ and $f_{3}$. The results are reported in Figure~\ref{fig:bb_example_p_beta1},
where we show the rate of decay of the $\calh_1$ and $L_\infty$ errors, and their ratio. It is interesting to observe that for both functions the $L_\infty$ 
error decays as $n^{-2}$, but the $\calh_1$ error of $f_{3}$ decays roughly as $n^{-1}$ (as for $f_1$, analyzed in Section~\ref{sec:bb_example}), while for
$f_{0.51}$, which is close to not being in $\calh_1$, the $\calh_1$ error is almost constant. This has the effect that the ratio of the two errors for 
$f_{3}$ is close to the limiting $n^{-1}$, as in Section~\ref{sec:bb_example}, while for $f_{0.51}$ it decay almost as $n^{-2}$, i.e., faster than the
prediction of Corollary~\ref{thm:new_conv_rates_explicit_sobolev}.  
This experiment confirms that the reason for a fast decaying error in Corollary~\ref{thm:new_conv_rates_explicit_sobolev} needs not to be a fast decay of the 
$\ns$-norm of the error. As anticipated in Section~\ref{sec:bb_example}, one possible reason for this behavior is the distribution of the selected points 
(bottom right in Figure~\ref{fig:bb_example_p_beta1}): for $f_{3}$ the points are almost equally spaced, while for $p=0.51$ they are concentrated close to
the boundary. 
\begin{figure}[ht]
\centering
\begin{tabular}{cc}
\includegraphics[width=0.5\textwidth]{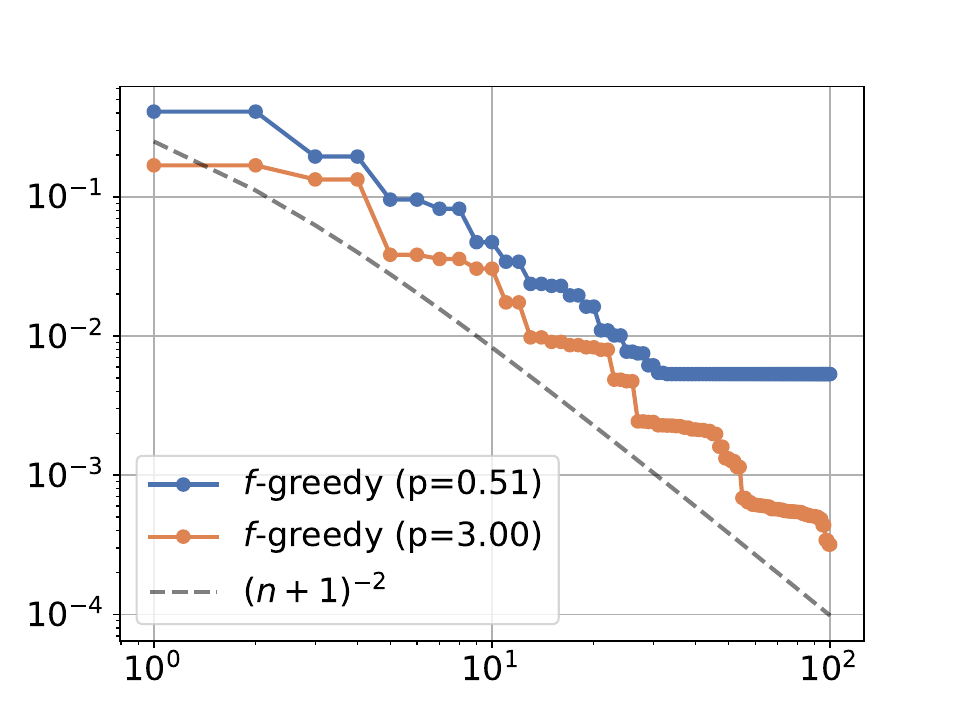}
&
\includegraphics[width=0.5\textwidth]{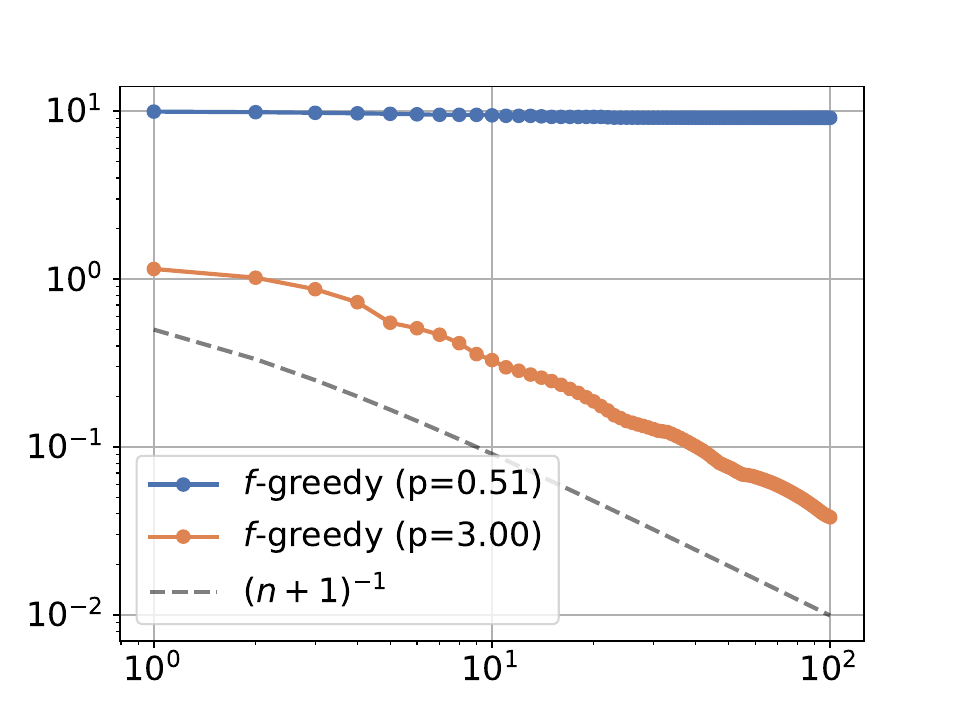}
\\
\includegraphics[width=0.5\textwidth]{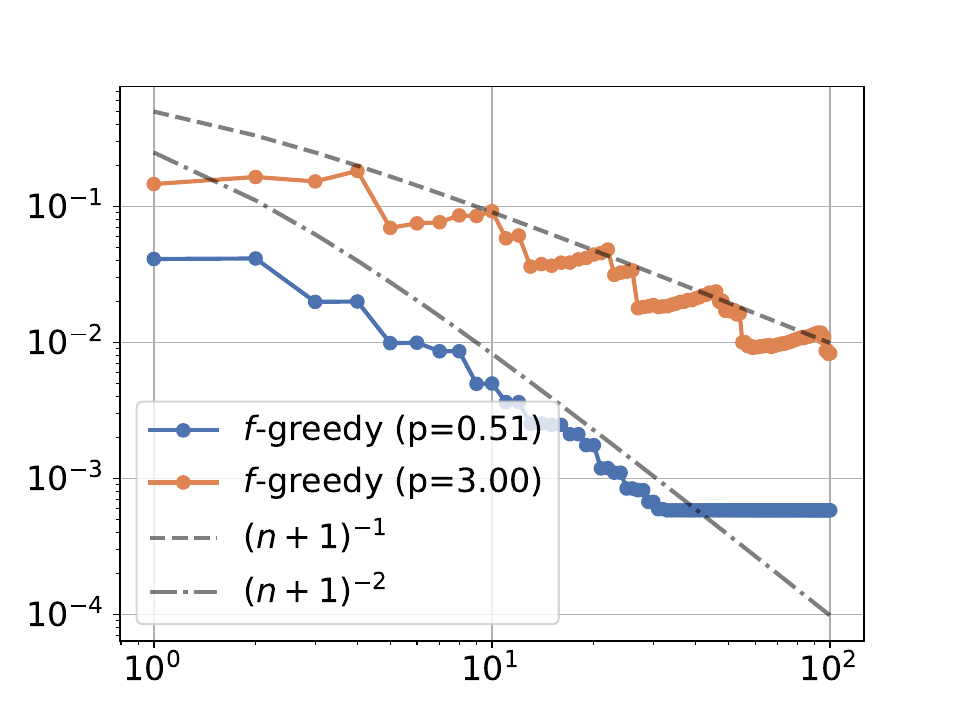}
&
\includegraphics[width=0.5\textwidth]{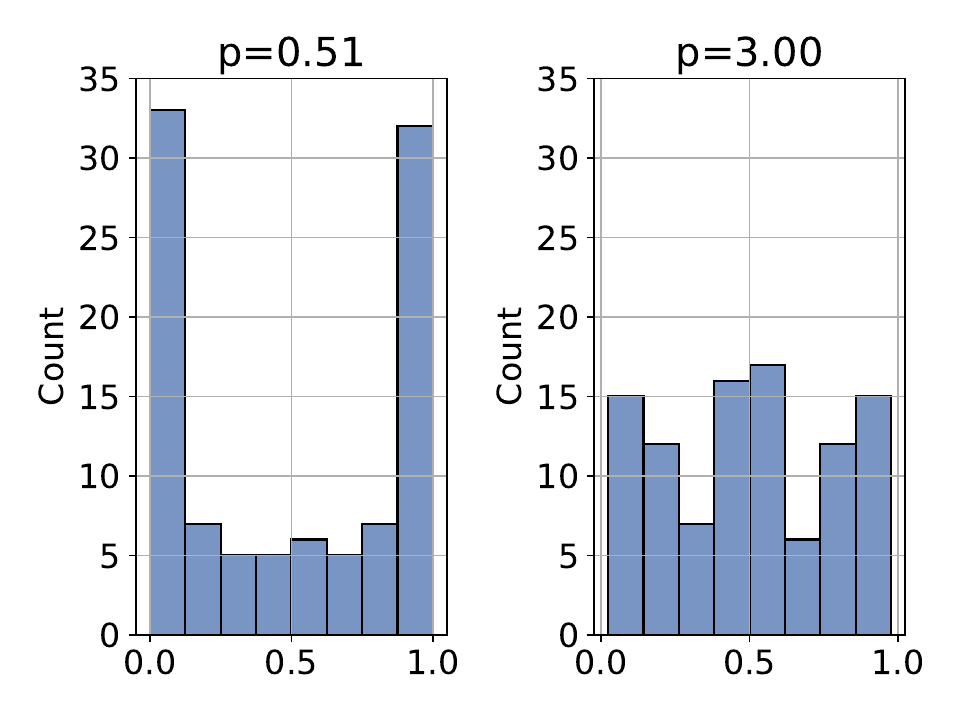}
\end{tabular}
\caption{
Results of the interpolation of $f_p$, for values $p=0.51, 3$, by $f$-greedy in $\Omega=(0,1)$ with kernel $k_1$.
The plots show the decay of the $L_\infty$-error (top left), the $\ns$-error (top right), and their ratio (bottom left), together with their estimated algebraic rates.
The bottom right figure shows a density plot of the selected interpolation points, i.e., a count of the number of points selected in each represented subinterval.
}\label{fig:bb_example_p_beta1}
\end{figure}

In the second experiment we consider instead $k_2$ and $f_{1.51}$, $f_4$. 
The results are reported in Figure~\ref{fig:bb_example_p_beta2}, with the same 
organization as in the previous example. 
The function $f_{1.51}$ is close to be not in $\calh_2$, and indeed the behavior of the error for $f_{1.51}$ reflects the case of $f_{0.51}$ for $k_1$. The 
results for $f_4$ are instead interesting because the ratio of its $L_\infty$ and $\calh_2$ interpolation errors decays close to the rate $n^{-2}$, which is 
exactly the rate $3/2+1/2=2$ predicted by Corollary~\ref{thm:new_conv_rates_explicit_sobolev} for $f$-greedy interpolation with $k_2$. As anticipated in 
Section~\ref{sec:bb_example}, this numerically confirms the optimality of the estimate even for smoother kernels.

\begin{figure}[ht]
\centering
\begin{tabular}{cc}
\includegraphics[width=0.5\textwidth]{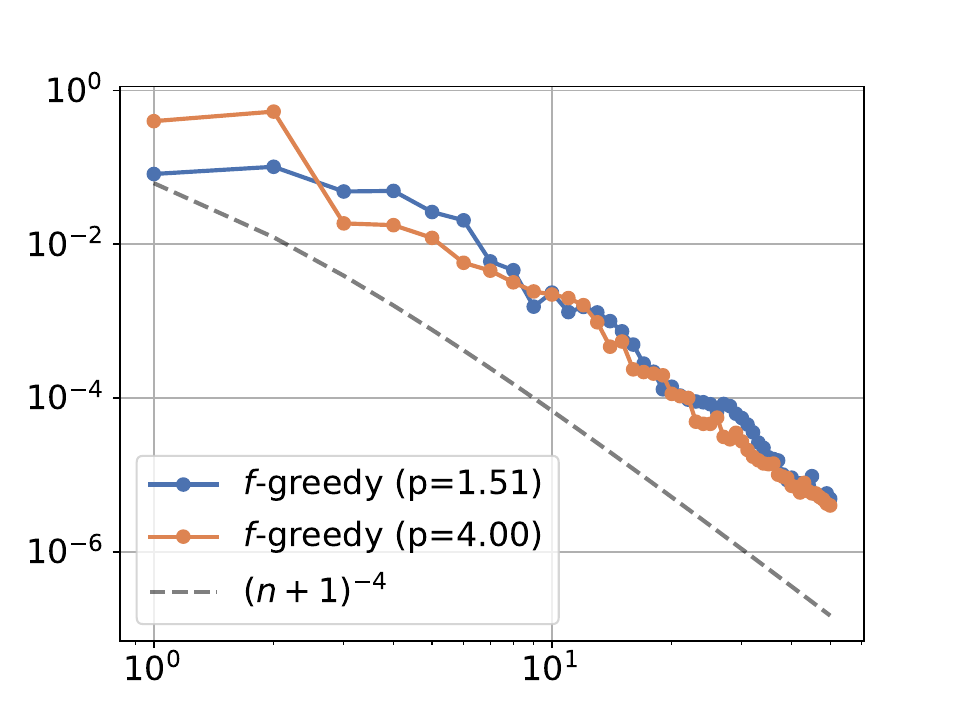}
&
\includegraphics[width=0.5\textwidth]{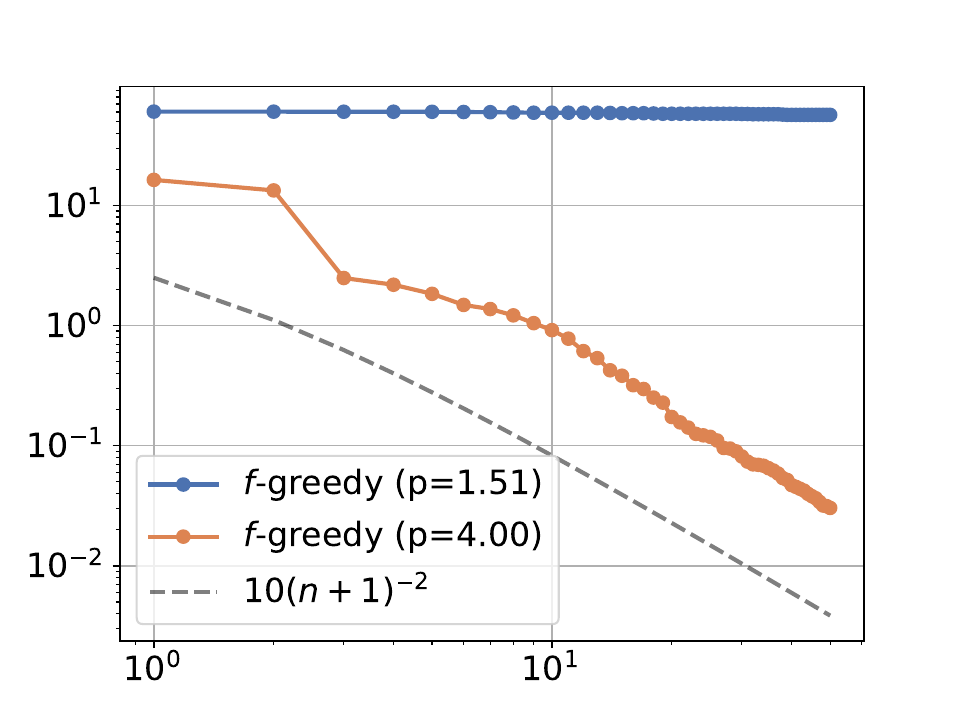}
\\
\includegraphics[width=0.5\textwidth]{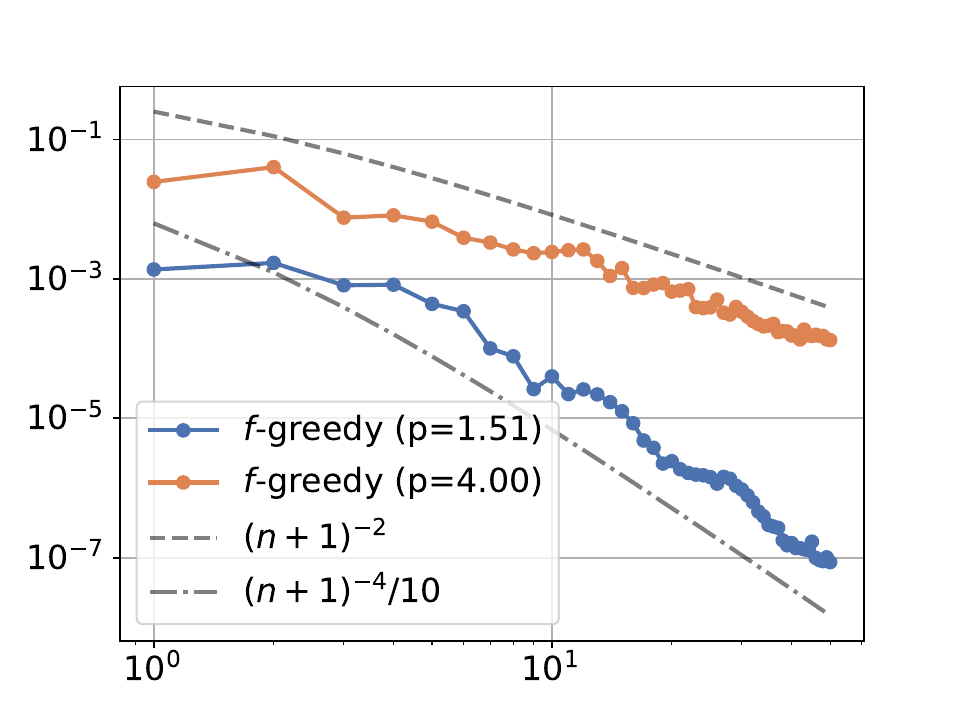}
&
\includegraphics[width=0.5\textwidth]{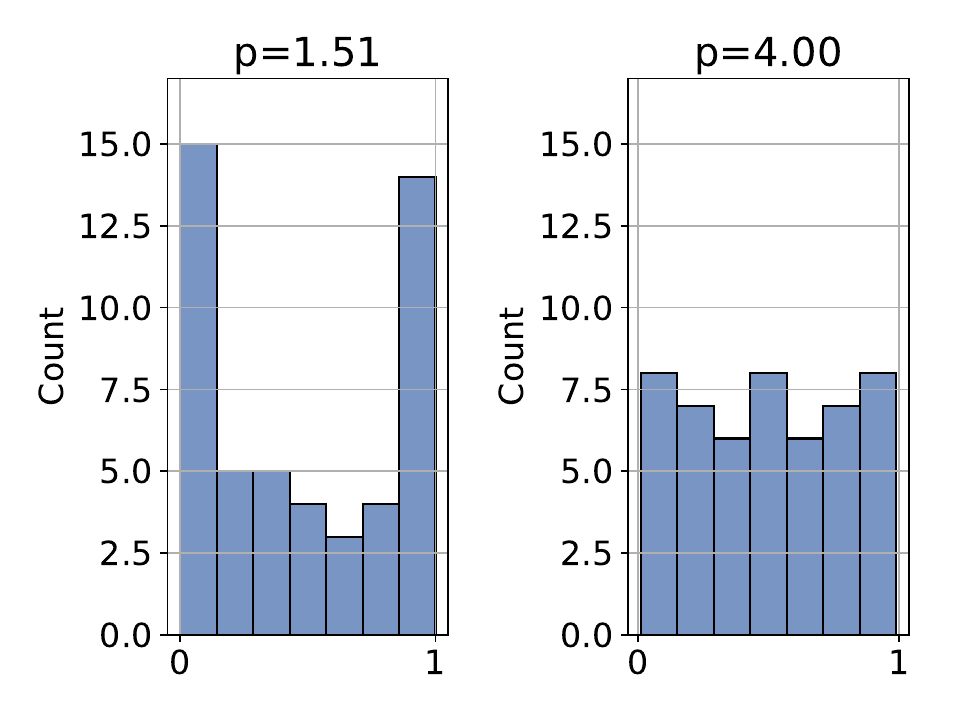}
\end{tabular}
\caption{
Results of the interpolation of $f_p$, for values $p=1.51, 4$, by $f$-greedy in $\Omega=(0,1)$ with kernel $k_2$. 
The plots show the decay of the $L_\infty$-error (top left), the $\ns$-error (top right), and their ratio (bottom left), together with their estimated 
algebraic rates. The bottom right figure shows a density plot of the selected interpolation points, i.e., a count of the number of points selected in each represented subinterval.
}\label{fig:bb_example_p_beta2}
\end{figure}

\section*{Acknowledgements}  
The second and third authors would like to thank the German Research Foundation (DFG) for support within the Germany's Excellence Strategy - EXC 2075 - 
390740016.
\bibliography{main}
\bibliographystyle{abbrv}

\appendix
\section{Additional proofs}\label{sec:appendix}
\begin{proof}[Proof of Lemma~\ref{lemma:weak_beta_greedy}]
First, the assumption that $k$ is well defined on $\bar \Omega$ ensures that $\ns\subset\calh_k(\bar \Omega)$ (see \eqref{eq:rkhs_extension}).
We split the proof into cases as follows.
\begin{enumerate}
\item For $\beta \in[0,1]$, the definition~\eqref{eq:weak_beta_greedy} gives
\begin{equation*}
\left|r_i(x)\right|^\beta P_{X_i}(x)^{1-\beta}
\leq \gamma^{-1} \left|r_i(x_{i+1})\right|^\beta P_{X_i}(x_{i+1})^{1-\beta}, \;\;x\in\overline\Omega,
\end{equation*}
which already gives the statement for $\beta=1$ by taking the supremum over $\Omega$ on the left hand side.
For any other $\beta\in(0,1]$ we can set $\bar x\coloneqq \argmax_{x\in\bar\Omega}|r_i(x)|$, and continue to
\begin{equation}\label{eq:tmp_appendix}
P_{X_i}(\bar x) 
\leq 
\gamma^{-\frac{1}{1-\beta}}\frac{|r_i(x_{i+1})|^{\frac{\beta}{1-\beta}}}{|r_i(\bar x)|^{\frac{\beta}{1-\beta}}}  P_{X_i}(x_{i+1}).
\end{equation}
Now $r_i\in\ns\subset\calh_k(\bar \Omega)$ by \eqref{eq:rkhs_extension}, and thus we can use the power function estimate 
\eqref{eq:power_fun_err_bound} written for $\calh_k(\bar \Omega)$, and insert it in \eqref{eq:tmp_appendix} to get 
\begin{align*}
\norm{L_\infty(\Omega)}{r_i} 
&
\leq \norm{L_\infty(\bar \Omega)}{r_i} 
= |r_i(\bar{x})| 
\leq P_{X_i}(\bar{x}) \norm{\calh_k(\bar\Omega)}{r_i} \\
&
\leq \gamma^{-\frac{1}{1-\beta}} \frac{|r_i(x_{i+1})|^{\frac{\beta}{1-\beta}}}{|r_i(\bar{x})|^{\frac{\beta}{1-\beta}}}  
P_{X_i}(x_{i+1})\norm{\calh_k(\bar\Omega)}{r_i}\\
&= \gamma^{-\frac{1}{1-\beta}} \frac{|r_i(x_{i+1})|^{\frac{\beta}{1-\beta}}}{\norm{L_\infty(\bar\Omega)}{r_i}^{\frac{\beta}{1-\beta}}}  P_{X_i}(x_{i+1}) 
\norm{\calh_k(\bar\Omega)}{r_i}\\
&\leq \gamma^{-\frac{1}{1-\beta}} \frac{|r_i(x_{i+1})|^{\frac{\beta}{1-\beta}}}{\norm{L_\infty(\Omega)}{r_i}^{\frac{\beta}{1-\beta}}}  P_{X_i}(x_{i+1}) 
\norm{\calh_k(\bar\Omega)}{r_i},
\end{align*}
which yields the final result by rewriting for $\norm{L_\infty(\Omega)}{r_i}$.

\item For $\beta \in (1, \infty)$, the selection criterion~\eqref{eq:weak_beta_greedy} can be rearranged to
\begin{equation*}
\left|r_i(x)\right|^\beta
\leq \gamma^{-1} \left|r_i(x_{i+1})\right|^\beta \frac{P_{X_i}(x)^{\beta-1}}{P_{X_i}(x_{i+1})^{\beta-1}}, \;\;x\in\overline\Omega\setminus X_i,
\end{equation*}
i.e.,
\begin{equation*}
\left|r_i(x)\right|
\leq \gamma^{-1/\beta} \left|r_i(x_{i+1})\right| \frac{P_{X_i}(x)^{(\beta-1)/\beta}}{P_{X_i}(x_{i+1})^{(\beta-1)/\beta}}, \;\;x\in\overline\Omega\setminus X_i,
\end{equation*}

and taking the supremum over $\Omega \setminus X_i$ gives the result, since $P_{X_i}(x)\leq \norm{L_\infty(\Omega)}{P_{X_i}}$, and $\gamma\leq 1$ implies 
$\gamma^{-1/\beta} \leq \gamma^{-1}$.

\item For $\beta = \infty$, the criterion~\eqref{eq:weak_beta_greedy} can be directly rearranged to yield 
the statement, with the same convention $1/\infty = 0$.
\end{enumerate}
\end{proof}

\section{Calculations for the example in Section~\ref{sec:bb_example}}\label{sec:bb_computations}

For $n\in\N_0$, $X_n\coloneqq\{x_1\leq x_2\leq \dots \leq x_n\}\subset(0,1)$, we set $X_n'\coloneqq \{0,1\}\cup X_n$.
It is easy to show that the kernel interpolant on $X_n$ is the piecewise linear, continuous interpolant on $X_n'$ (i.e., a linear spline with zero 
boundary conditions), see Chapter 6 and Chapter 7 in \cite{Fasshauer2015}.

We consider the target function $f(x)\coloneqq x(1-x)\in\ns$, which has norm $\norm{\ns}{f} = 1/\sqrt{3}$.

Let $X_n$, $n\in\N_0$, with $X_0\coloneqq\emptyset$, and $X_n'\coloneqq \{0,1\}\cup X_n$, where we set $x_0\coloneqq 0$ and $x_{n+1}\coloneqq1$.
For all consecutive $a,b\in X_n'$ with $a<b$, the interpolant $I_{X_n}f$ can be written for $x\in[a,b]$ as
\begin{equation*}
I_{X_n}(x) = f(a) + \frac{f(b)-f(a)}{b-a} (x-a) = a b + (1 - a - b) x,
\end{equation*}
and thus $\left|r_n(x)\right| = r_n(x) = f(x) - I_{X_n}(x) = -(x-a)(x-b)$ is uniquely maximized in $[a,b]$ at the point $\bar x\coloneqq (a+b)/2$, where it has a value
\begin{equation}\label{eq:bb_residual}
\left|r_n(\bar x)\right|= \frac{(b-a)^2}{4}.
\end{equation}
Moreover, $r_n(x)' =  a + b - 2x$ for $x\in[a,b]$, and thus 
\begin{equation*}
\int_a^b \left(r_n(x)'\right)^2 dx
=\int_{a}^{b} (a+b-2 x)^2 dx
=\frac{(b - a)^3}{3}.
\end{equation*}
This implies that
\begin{equation}\label{eq:bb_norm}
\norm{\ns}{r_n}^2 
= \int_0^1 \left(r_n(x)'\right)^2 dx
=\sum_{i=1}^{n+1}\frac{(x_{i}-x_{i-1})^3}{3}.
\end{equation}
The relation~\eqref{eq:bb_residual} first implies that 
\begin{equation*}
\norm{L_\infty((0,1))}{f-I_{X_n}f} = \max\limits_{i=1, \dots,n+1}\frac{(x_i-x_{i-1})^2}{4},
\end{equation*}
and thus the equally spaced points $\bar X_n\coloneqq\{{i}/{(n+1)},\ 1\leq i\leq n\}$ are the unique optimal interpolation points for $f$ with respect to
the $L_\infty$ norm, for all $n\in\N$, with 
\begin{equation}\label{eq:bb_computations_tmp_three}
\norm{L_\infty((0,1))}{\bar r_n} = \frac{1}{4} (n+1)^{-2}.
\end{equation}
Moreover, using \eqref{eq:bb_norm} the residual has $\ns$-norm
\begin{equation}\label{eq:bb_computations_tmp_five}
\norm{\ns}{\bar r_n}^2
= \frac{1}{3}\sum_{i=1}^{n+1}(n+1)^{-3}
= \frac{1}{3}(n+1)^{-2},
\end{equation}
i.e., $\norm{\ns}{\bar r_n}= (n+1)^{-1}/\sqrt{3}$. Finally,
\begin{equation}\label{eq:bb_computations_tmp_six}
\frac{\norm{L_\infty((0,1))}{\bar r_n}}{\norm{\ns}{\bar r_n}} = \frac{\sqrt{3}}{4} (n+1)^{-1}.
\end{equation}

Moving to $f$-greedy, equation~\eqref{eq:bb_residual} proves as well that $f$-greedy needs to select at each iteration the midpoint of the largest interval 
that does not contain any point. This means that $f$-greedy selects points
\begin{align*}
&x_1 = 1/2\\
&x_2 = 1/4, \;\; x_3 = 3/4\\
&x_4 = 1/8, \;\; x_5 = 3/8, \;\; x_6 = 5/8, \;\; x_7 = 7/8\\
&\dots
\end{align*}
i.e., for $\ell\in\N$ the points are selected as
\begin{equation}\label{eq:bb_computations_tmp_two}
x_{2^{\ell-1} + i} = (2i + 1) / 2^{\ell},\;\; i=0,\dots, 2^{\ell-1}-1,
\end{equation}
where the points corresponding to the same $\ell\in\N$ may be selected in any order. In particular, for the final index $i=2^{\ell-1}-1$, i.e., $n = 2^{\ell-1} + i = 2^\ell-1$, we get equally spaced points.
By definition of $f$-greedy, the points~\eqref{eq:bb_computations_tmp_two} give
\begin{equation*}
\norm{L_\infty(\Omega)}{r_{n-1}} 
= |r_{n-1}(x_{n})|,
\end{equation*}
i.e., using~\eqref{eq:bb_residual}, and the fact that for fixed $\ell$ each point $x_{2^{\ell-1} + i}$ is selected in an interval with width $1/2^{\ell-1}$, we have
\begin{align*}
&\norm{L_\infty(\Omega)}{r_{0}}  = 1^2/4=2^{-2}\\
&\norm{L_\infty(\Omega)}{r_{1}} = \norm{L_\infty(\Omega)}{r_{2}}=(1/2)^2/4 = 1/16=2^{-4}\\
&\norm{L_\infty(\Omega)}{r_{3}} = \norm{L_\infty(\Omega)}{r_{4}}=\norm{L_\infty(\Omega)}{r_{5}} = \norm{L_\infty(\Omega)}{r_{6}} = (1/4)^2/4=2^{-6}\\
&\dots
\end{align*}
i.e.,
\begin{equation}\label{eq:bb_computations_tmp_one}
\norm{L_\infty(\Omega)}{r_{2^{\ell-1} + i-1}}
= \left|r_{2^{\ell-1} + i-1}(x_{2^{\ell-1} + i})\right|
=(1/2^{\ell-1})^2/4 = 2^{-2\ell},\;\; i=0,\dots, 2^{\ell-1}-1.
\end{equation}

We now compare this error with $\norm{L_\infty(\Omega)}{\bar r_{n-1}}$ (see \eqref{eq:bb_computations_tmp_three}).
As a function of $n=2^{\ell-1}+i$, the ratio between~\eqref{eq:bb_computations_tmp_one} and $\norm{L_\infty(\Omega)}{\bar r_{n-1}}$ is minimal for $i=0$ 
(equally spaced points) and maximal for $i=2^{\ell-1}-1$.
In the first case ($i=0$ and thus $n=2^{\ell-1}$) \eqref{eq:bb_computations_tmp_one} gives $\norm{L_\infty(\Omega)}{r_{n-1}}= 2^{-2\ell}=(2^{\ell-1})^{-2}/4= n^{-2}/ 4=\norm{L_\infty(\Omega)}{\bar r_{n-1}}$.
In the second case ($i=2^{\ell-1}-1$ and thus $n=2^{\ell}-1$) \eqref{eq:bb_computations_tmp_one} gives $\norm{L_\infty(\Omega)}{r_{n-1}} =
2^{-2\ell}=(n+1)^{-2}\leq 4 n^{-2}/4 = 4 \norm{L_\infty(\Omega)}{\bar r_{n-1}}$. It follows that for all $n\in\N$ we have
\begin{equation}\label{eq:bb_computations_tmp_four}
\norm{L_\infty(\Omega)}{\bar r_{n}} \leq \norm{L_\infty(\Omega)}{r_{n}} \leq 4 \norm{L_\infty(\Omega)}{\bar r_{n}}.
\end{equation}

For the $\ns$-norm of $r_n$ we compute the squared norm increment $c_n^2
= \norm{\ns}{I_{X_n}}^2-\norm{\ns}{I_{X_{n-1}}}^2$.  From \eqref{eq:bb_norm}, if $x_{n}$ is added as the midpoint of $[a,b]$, we replace a term $(b-a)^3/3$ 
with two terms $((b-a)/2)^3/3$, i.e., we have a reduction $(b-a)^3/3 - 2((b-a)/2)^3/3 = (b-a)^3/4$ in the squared $\ns$ norm.
Using again the fact that $b-a=2^{\ell-1}$ for any $\ell\in\N$, it follows that
\begin{align*}
&c_1^2  = 1^3/4=2^{-2}\\
&c_2^2 = c_3^2=(1/2)^3/4 = 2^{-5}\\
&c_4^2 = \dots = c_7^2= (1/4)^3/4=2^{-8}\\
&\dots\\
&c_{2^{\ell-1} + i}^2 = (1/2^{\ell-1})^3/4 = 2^{-3\ell+1},\;\; i=0,\dots, 2^{\ell-1}-1.
\end{align*}
To derive the decay of the $\ns$-error, we first consider the case $n=2^{\ell-1}$, for which we have
\begin{align*}
\norm{\ns}{r_{n-1}}^2
&=\norm{\ns}{r_{2^{\ell-1}-1}}^2
= \sum_{j={2^{\ell-1}}}^\infty c_j^2
= \sum_{m=\ell}^\infty \sum_{h=0}^{2^{m-1}-1} c_{2^{m-1}+h}^2
= \sum_{m=\ell}^\infty \sum_{h=0}^{2^{m-1}-1} 2^{-3m+1}\\
&= \sum_{m=\ell}^\infty 2^{m-1} 2^{-3m+1}
= \sum_{m=\ell}^\infty 2^{-2m} = \frac13 \left(2^{\ell-1}\right)^{-2} = \frac13 n^{-2},
\end{align*}
and this indeed coincides with the optimal case~\eqref{eq:bb_computations_tmp_five}, as expected since the points are equally spaced.
For the other cases $n=2^{\ell-1}+i$, $i=1, \dots, 2^{\ell-1}-1$, we have
\begin{equation*}
\norm{\ns}{r_{n-1}}^2
=\norm{\ns}{r_{2^{\ell-1}-1+i}}^2
= \norm{\ns}{r_{2^{\ell-1}-1}}^2 - \sum_{j=1}^i c_{2^{\ell-1}-1+j}^2
=\frac13 \left(2^{\ell-1}\right)^{-2} - 2^{-3\ell +1} i.
\end{equation*}
All together, for all $n=2^{\ell-1}+i$, $i=0, \dots, 2^{\ell-1}-1$, we have
\begin{equation*}
\norm{\ns}{r_{n-1}}^2
=\frac13 \left(2^{\ell-1}\right)^{-2} - 2^{-3\ell +1}i
=\frac{2^{1-3\ell}}{3} \left(2^{\ell+1}  - 3 i\right).
\end{equation*}
To compare this with the optimal error~\eqref{eq:bb_computations_tmp_five} we consider, for $n=2^{\ell-1}+i$, $\ell\in\N$, $i=0,\dots, 2^{\ell-1}-1$, the ratio
\begin{equation*}
R(i,\ell)
\coloneqq\frac{\norm{\ns}{r_{n-1}}^2}{\norm{\ns}{\bar r_{n-1}}^2}
=\frac{\frac{2^{1-3\ell}}{3} \left(2^{\ell+1}  - 3 i\right)}{\frac13 \left(2^{\ell-1}+i\right)^{-2}}
= 2^{1-3\ell}\left(2^{\ell-1}+i\right)^{2} \left(2^{\ell+1} - 3i\right).
\end{equation*}
We have
\begin{equation*}
\partial_i R(i,\ell)
= 2^{1-3\ell}\left(2\left(2^{\ell-1}+i\right) \left(2^{\ell+1} - 3i\right) -3 \left(2^{\ell-1}+i\right)^{2}\right)
= 2^{1-3\ell}\left(2^{\ell-1}+i\right) \left(5\cdot 2^{\ell-1} - 9i\right),
\end{equation*}
which vanishes at $i_1\coloneqq -2^{\ell-1}\leq0$ and $i_2\coloneqq (5/9) 2^{\ell-1}$, and it is positive in $(i_1, i_2)$. In particular, for all $\ell\in\N$ the minima of $R(i, \ell)$ for $i=0, \dots, 2^{\ell-1}-1$ are at the extrema, with
\begin{equation*}
R(0, \ell) = 2^{1-3\ell}2^{2\ell-2}2^{\ell+1}=1,
\quad\quad
R(2^{\ell-1}-1, \ell)
\geq R(2^{\ell-1}, \ell)
= 2^{1-3\ell}2^{4\ell}2^{\ell-1}=1,
\end{equation*}
and indeed one can see from \eqref{eq:bb_norm} that equally spaced points are optimal also in the $\ns$-norm. The maximum of $R(i, \ell)$ is instead at $i=i_2$, 
where
\begin{equation*}
R(i_2, \ell)
=2^{1-3\ell}(14/9)^2 2^{2\ell-2} (7/3) 2^{\ell-1}
=7^3 / 3^5<2.
\end{equation*}
It follows that
\begin{equation*}
\norm{\ns}{\bar r_{n-1}}^2
\leq
\norm{\ns}{r_{n-1}}^2
\leq 2 \norm{\ns}{\bar r_{n-1}}^2,
\end{equation*}
and combining this with~\eqref{eq:bb_computations_tmp_four} and~\eqref{eq:bb_computations_tmp_six} we get finally
\begin{equation*}
\frac{\sqrt{3}}{4\sqrt{2}} (n+1)^{-1}
=\frac{1}{\sqrt{2}} \frac{\norm{L_\infty((0,1))}{\bar r_n}}{\norm{\ns}{\bar r_n}}
\leq \frac{\norm{L_\infty((0,1))}{r_n}}{\norm{\ns}{r_n}}
\leq 4 \frac{\norm{L_\infty((0,1))}{\bar r_n}}{\norm{\ns}{\bar r_n}}
= \sqrt{3} (n+1)^{-1}.
\end{equation*}
Observe that sharper upper and lower bounds can be obtained in this last chain of inequalities by considering the ratio 
${\norm{L_\infty((0,1))}{r_n}}/{\norm{\ns}{r_n}}$ instead of its single components.
\end{document}